\newtheorem{ass}{Assumption}
\newtheorem{rmk}{Remark}
\newtheorem{lmm}{Lemma}
\newtheorem{thm}{Theorem}
\newcommand{\ints}{\mathbb{N}}
\newcommand\intrng[2]{\ints_{[#1,#2]}}
\newcommand{\reals}{\mathbb{R}}
\newcommand{\mc}[1]{\mathcal{#1}}
\newcommand{\proj}[1]{\Pi_{#1}}
\newcommand{\Int}[0]{\mathrm{Int}~}
\renewcommand{\ker}[0]{\mathrm{Ker}~}
\DeclareMathOperator*{\argmin}{arg\,min}
\begin{document}
% Alternate: A feasibility governor for Enlarging the Region of Attraction of Linear Model Predictive Controllers
\title{Feasibility Governor for Linear Model Predictive Control
\thanks{ M. Nicotra and T. Skibik are with the University of Colorado, Boulder, Email: \texttt{\{marco.nicotra, terrence.skibik\}@colorado.edu}.}
\thanks{D. Liao-McPherson, T. Cunis and I. Kolmanovsky are with the University of Michigan, Ann Arbor. Email:\texttt{\{dliaomcp, tcunis, ilya\}@umich.edu}.}  \thanks{This research is supported by the National Science Foundation Award Numbers CMMI 1904441, CMMI 1904394, and the Toyota Research Institute (TRI). TRI provided funds to assist the authors with their research but this article solely reflects the opinions and conclusions of its authors and not TRI or any other Toyota entity.}}
\author{Terrence Skibik, Dominic Liao-McPherson, Torbjørn Cunis, Ilya Kolmanovsky, and Marco M. Nicotra}

\maketitle

\begin{abstract}
This paper introduces the Feasibility Governor (FG): an add-on unit that enlarges the region of attraction of Model Predictive Control by manipulating the reference to ensure that the underlying optimal control problem remains feasible. The FG is developed for linear systems subject to polyhedral state and input constraints. Offline computations using polyhedral projection algorithms are used to construct the feasibility set. Online implementation relies on the solution of a convex quadratic program that guarantees recursive feasibility. The closed-loop system is shown to satisfy constraints, achieve asymptotic stability, and exhibit zero-offset tracking. 
\end{abstract}

%%*************************************

\section{Introduction}
% what is MPC

Model Predictive Control \cite{rawlings2018model,goodwin2006constrained,mayne2014model} (MPC) is a feedback policy that computes the solution of a receding horizon Optimal Control Problem (OCP) at every sampling instant. %MPC is used in numerous applications as it enables high-performance control while systematically enforcing state and control constraints. Moreover, MPC is supported by a robust theoretical literature with various ways to guarantee stability.
A common approach for guaranteeing the stability of MPC is to impose suitable conditions on the final step of the OCP \cite{mayne2000mpc,chen1998quasi}. The Region of Attraction (ROA) of the resulting closed-loop system is then given by  all the states that can reach the terminal constraint set within the prediction horizon.

Since the terminal set is centered on the desired reference, sudden reference changes can cause the OCP to become infeasible if the  system is unable to reach the new terminal set within the prediction horizon. Although this issue could be avoided by increasing the prediction horizon,  doing so can significantly increase the computational complexity of the controller.

A different option for increasing the ROA is to treat aspects of the terminal set as optimization variables and use the additional degrees of freedom to enlarge the feasible set. This approach has been applied to regulation \cite{gonzalez2009enlarging}, and reference tracking \cite{limon2008mpc,simon2014reference} of linear systems, and also economic operation of nonlinear systems \cite{fagiano2013generalized}. Alternatively, \cite{limon2005enlarging} enlarges the ROA by computing a contractive sequence of terminal sets offline and incorporating them into the OCP. The drawback of all these methods is that they rely on a non-standard OCP, making them difficult to combine with other MPC schemes.

\begin{figure}
	\centering
	\includegraphics[width=\columnwidth]{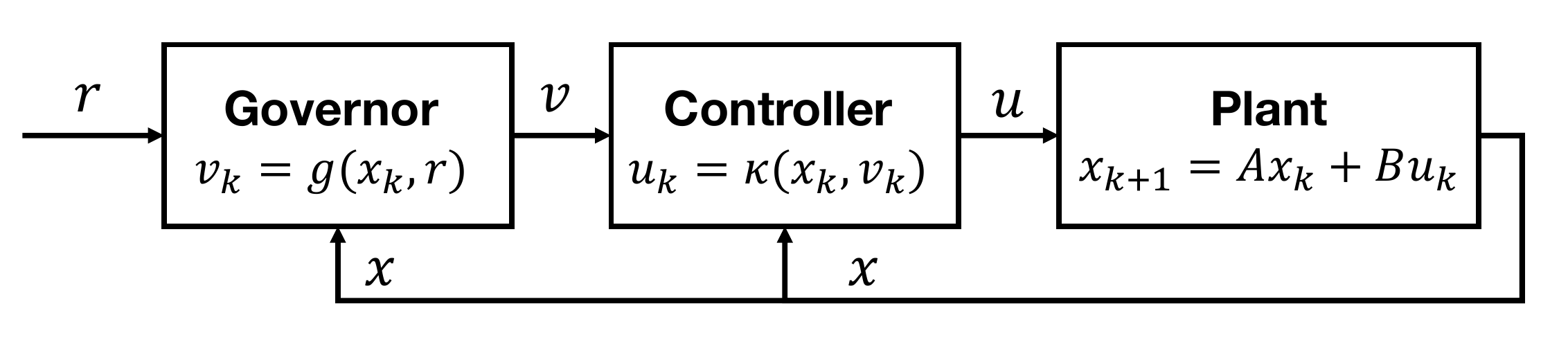}
	\caption{A block diagram of the control architecture. Given a reference $r$, the Feasibility Governor manipulates the auxiliary reference $v$ to ensure that the primary MPC controller is able to produce a valid control input $u$.}
	\label{fig:fg_block_diagram}
\end{figure}

In this paper, we introduce the Feasibility Governor (FG), an add-on unit that filters the reference signal to ensure that the terminal set remains reachable within the prediction horizon. The FG takes advantage of offline polyhedral set manipulation tools \cite{herceg2013multi,lohne2017vector} to reduce online complexity while minimizing conservatism. Doing so enables the FG to expand the ROA to the set of initial conditions that can reach the terminal set of \emph{any} steady state admissible reference, as opposed to just the target reference. This is achieved without any modifications to the existing MPC controller. Moreover, we prove that the FG ensures the constraints are never violated, and exhibits finite time convergence to the desired reference. The proposed control architecture is illustrated in Figure~\ref{fig:fg_block_diagram}.

The idea of manipulating the reference to avoid infeasibility in MPC can be found in prior literature. The recovery mode featured in \cite{chisci2003dual} simultaneously computes a modified reference and control input to enforce feasibility at the expense of performance. An FG-like algorithm that combines a governor and explicit MPC controller into a single unit is proposed in \cite{olaru2005compact}, but suffers from the complexity limitations of explicit MPC \cite{bemporad2002explicit}. A suboptimal continuous-time analog of the FG %in \cite{mayne2016generalized} 
is proposed in \cite{DEMPC_tac}. Finally, a spatial governor specifically designed for precision machining applications is proposed in \cite{di2018cascaded}. This paper provides both a detailed look into the theoretical properties of the proposed FG, and a method for computing the feasible sets.

\textit{Notation:} For vectors $a$ and $b$, $(a,b) = [a^T~~b^T]^T$. The identity matrix is $I_N \in \reals^{N\times N}$. 
Given $M\in \reals^{m\times n}$, $\ker M = \{x~|~Mx = 0\}$. Given $x \in \reals^n$ and a positive definite matrix $P \in \reals^{n \times n}$, the weighted norm is $\|x\|_P = \sqrt{x^TPx}$. %Given $\mc{U}\subseteq \reals^n$, $\Int\!\mc{U}$ denotes the interior of $\mc{U}$. Positive (semi) definiteness of a matrix $P \in \reals^{n \times n}$ is denoted by $(P \succeq 0)$ $P \succ 0$; and $\|x\|_P = \sqrt{x^TPx}$ for $x \in \reals^n$.
Given $x\in \reals^n$, $y\in \reals^m$ and a set $\Gamma \subseteq \reals^{n+m}$, the projection of $\Gamma$ onto the domain of $x$ is $\Pi_x \Gamma$, where $\Pi_x = [I_{n}~0_{n\times m}]$, and %, i.e., $x = \Pi_x (x,y)$. 
the slice (or cross-section) operation is $S_y(\Gamma,x) = \{y~|~(x,y)\in \Gamma\}$. %For $x\in \reals^n$ and $\delta \geq 0$, $\mc{B}_\delta(x) = \{y~|~\|y-x\|\leq \delta \}$.

\section{Problem Setting}
Consider the linear time invariant (LTI) system 
\begin{subequations} \label{eq:LTI_system}
\begin{align} 
x_{k+1} &= A x_k + B u_k\\
y_k &= Cx_k + D u_k\\
z_k &= E x_k + F u_k,\end{align}
\end{subequations}
where $k\in \ints$ is the discrete-time index and $x_k \in \reals^{n_x}$, $u_k \in \reals^{n_u}$, $y_k \in \reals^{n_y}$, and $z_k \in \reals^{n_z}$ are the states, control inputs, constrained outputs, and tracking outputs, respectively. System \eqref{eq:LTI_system} is subject to pointwise-in-time constraints
\begin{equation}\label{eq:constraints}
   y_k \in \mc{Y},\quad  \forall k \in \ints,
\end{equation}
where $\mc{Y}\subseteq \reals^{n_y}$ is the constraint set.

\begin{ass} \label{ass:stabilizable}
The pair $(A,B)$ is stabilizable.
\end{ass}
As detailed in \cite{limon2008mpc}, Assumption~\ref{ass:stabilizable} implies that
\begin{equation} \label{eq:equilibria}
    Z = \begin{bmatrix}
		A- I_{n_x} & B & 0\\
		E & F & -I_{n_z}
	\end{bmatrix}
\end{equation}
satisfies $\ker\!\!(Z)\neq\{0\}$ and, as a result, system \eqref{eq:LTI_system} admits a family of equilibrium points satisfying $Z\zeta = 0$ with $\zeta = (x,u,z) \neq 0$. Moreover, it is possible to introduce an auxiliary reference vector $v\in \reals^{n_v}$ to parameterize the equilibrium manifold as $\bar{\zeta}_v = (\bar{x}_v, \bar{u}_v, \bar{z}_v) = Gv$ where
\begin{equation}
   G^T := \left[G_x^T~~G_u^T~~G_z^T\right] 
\end{equation}
is a basis for $\ker\!\!(Z)$. The following assumption ensures that there is a one-to-one correspondence between the reference $v$ and the tracking output $z$.
\begin{ass} \label{eq:G_invert}
The matrix $G_z$ is invertible.
\end{ass}
Under Assumption~\ref{eq:G_invert}, it is possible to impose $G_z=I_{n_z}$ using the change of basis $G\leftarrow GG_z^{-1}$.

\begin{ass}\label{ass:constraint_set}
The set $\mc{Y}$ is a compact polyhedron with representation $\mc{Y} = \{y~|~Y y \leq h\}$ and satisfies $0 \in \Int \mc{Y}$.
\end{ass}
Given the design parameter $\epsilon \in (0, 1)$ and the corresponding set of strictly steady-state admissible references
\begin{equation} \label{eq:Veps}
   \mc{R}_\epsilon = \{v~|~(CG_x + DG_u) v \in (1-\epsilon)\mc{Y}\},
\end{equation}
we now state the control problem addressed by this paper.\medskip

\noindent\textbf{Control Objectives:} Given the LTI system \eqref{eq:LTI_system} subject to constraints \eqref{eq:constraints}, let $r\in\reals^{n_z}$ be a target reference. The goal of this paper is to design a full state feedback law that achieves
\begin{itemize}
    \item \textit{Safety:} $y_k\in\mathcal{Y},\quad \forall k \geq 0$;
    \item \textit{Convergence:} $\lim_{k\to\infty}z_k= r^\star$, where 
    \begin{equation*}
      r^\star = \argmin_{v\in \mc{R}_\epsilon}~\|v-r\|.
    \end{equation*}
\end{itemize}

\begin{rmk}
When the tracking problem is well posed, i.e., $r\in \mc{R}_\epsilon$, we recover $\lim_{k\to\infty}z_k=r$.
\end{rmk}

\section{Control Strategy}
Due the constraints, we approach the control objectives using a typical MPC formulation where the feedback policy is defined using the solution to the following OCP
\begin{subequations} \label{eq:LMPC_OCP}
\begin{alignat}{2}
\underset{\mu}{\mathrm{min}}& &&||\xi_N - \bar x_v||_P^2 + \sum_{i=0}^{N-1} ||\xi_i - \bar x_v||_Q^2 + ||\mu_i - \bar u_v||_R^2\\
\mathrm{s.t.}& ~ &&~\xi_0 = x, \label{eq:ocp_cstr1} \\
& &&~\xi_{i+1} = A\xi_i + B \mu_i, ~~~ i \in \intrng{0}{N-1},\\
& &&~C \xi_i + D \mu_i \in \mc{Y},\label{eq:ocp_cstr2} ~~~~~~ i \in \intrng{0}{N-1},\\
& &&\qquad(\xi_N,v) \in \mc{T}, \label{eq:ocp_cstr3}
\end{alignat}
\end{subequations}
where $N\in \ints_{> 0}$ is the prediction horizon, $\mu = (\mu_0,\ldots \mu_{N-1})$, 
$P$, $Q$, and $R$ are weighting matrices, and $\mc{T}\subseteq\reals^{n_x}\times\reals^{n_v}$ is the terminal set, which is assumed to be polyhedral, i.e.,
\begin{equation}
	\mc{T} = \{(x,v)~|~ T_x x + T_v v \leq c\}.
\end{equation}
The following assumptions ensure that \eqref{eq:LMPC_OCP} is well-posed and can be used to construct a stabilizing feedback law.

\begin{ass}\label{ass:DARE}
The stage cost matrices satisfy $Q \succeq 0$, with $(A,Q)$ detectable, and $R \succ 0$.
\end{ass}

Given Assumption~\ref{ass:DARE}, let $P$ be the solution to the discrete algebraic Riccati equation 
\begin{equation}
    P = Q+A^TPA-(A^TPB)(R+B^TPB)^{-1}(B^T PA),
\end{equation}
let $K$ be the associated LQR gain \begin{equation}
    K = (R+B^TPB)^{-1}(B^T PA),
\end{equation} and let $\mc{T}=\tilde{O}_{\infty}^{\epsilon}$ be the terminal set, with $\tilde{O}_{\infty}^{\epsilon}$ defined in \cite{gilbert1991linear}. 
By construction, the terminal set $\mc{T}$ is invariant and constraint admissible, i.e., $(x,v)\in\mc{T}$ implies 
\begin{subequations}\label{eq:terminal_set}
\begin{align}
    \left(A-BK\right)x+B\left(\bar{u}_v+K\bar{x}_v\right)&\in\mathcal{X}(v), \\
    \left(C-DK\right)x+D\left(\bar{u}_v+K\bar{x}_v\right)&\in\mathcal{Y},
\end{align}
\end{subequations}
where $\mathcal{X}(v) = S_x(\mc{T},v)$. We now have all the elements typically used to define an asymptotically stable MPC feedback policy \cite{mayne2000mpc}. However, the control action can be computed only if  \eqref{eq:LMPC_OCP} admits a solution. The set of all parameters for which the OCP admits a solution, i.e., the feasible set, is
\begin{equation}
    \label{eq:feasible_set}
	\Gamma_N = \{(x,v)~|~\exists~\mu:~ \eqref{eq:ocp_cstr1}-\eqref{eq:ocp_cstr3}\} \subseteq \reals^{n_x}\times\reals^{n_v},
\end{equation}  
which is the $N$-step backwards reachable set of $\mc{T}$. Since $\mc{T}$ is polyhedral, then $\Gamma_N$ is also polyhedral and can be computed offline, as detailed in Section~\ref{ss:feasible_set_computation}.
Assuming $(x,v)\in\Gamma_N$, it is possible to compute the MPC feedback policy
\begin{equation} \label{eq:MPC_feedback}
	\kappa(x,v) = \mu^\star_0(x,v)
\end{equation}
where $\mu^\star(x,v)=[\mu_0^{\star T},\mu_1^{\star T},\ldots,\mu_{N-1}^{\star T}]^T$ is the minimizer of \eqref{eq:LMPC_OCP}. The following theorem summarizes the properties of the closed-loop system for a constant auxiliary reference.
\begin{thm} \label{thm:MPC_stab}
Let Assumptions~\ref{ass:stabilizable}--\ref{ass:DARE} hold and let $\phi(\ell,x,v)$ denote the solution of the closed-loop dynamics
\begin{equation} \label{eq:closed_loop_dynamics}
x_{k+1} = f(x_k,v) \coloneqq Ax_k + B\kappa(x_k,v).
\end{equation} 
starting from the initial condition $x_0 = x$ at timestep $\ell \geq 0$.
Then for all $(x,v) \in \Gamma_N$.
\begin{itemize}
    \item $(\phi(\ell,x,v),v)\in\Gamma_N,~\forall \ell\geq 0$;
    \item $y_\ell \in \mc{Y},~\forall \ell\geq 0$;
    \item $\lim_{\ell\to\infty} \phi(\ell,x,v) = \bar x_{v}$.
\end{itemize}
If, in addition, $v \in \mc{R}_\epsilon$ then $\bar x_{v}$ is asymptotically stable.
\end{thm}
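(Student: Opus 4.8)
The plan is to follow the classical MPC stability argument of \cite{mayne2000mpc}, using the optimal value of \eqref{eq:LMPC_OCP}, which I denote $V_N(x,v)$, as a Lyapunov function and exploiting the invariance of the terminal set $\mc{T}$ encoded in \eqref{eq:terminal_set}.

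\emph{Recursive feasibility and safety.} Fix $(x,v)\in\Gamma_N$ and let $\mu^\star=(\mu_0^\star,\dots,\mu_{N-1}^\star)$ with predicted states $\xi_0^\star,\dots,\xi_N^\star$ be an optimizer, so the closed loop applies $\kappa(x,v)=\mu_0^\star$ and the successor state is $x^+=Ax+B\mu_0^\star=\xi_1^\star$. Following the standard shift-and-append construction, I would form the candidate input
\begin{equation*}
  \tilde\mu=\bigl(\mu_1^\star,\dots,\mu_{N-1}^\star,\;\bar u_v-K(\xi_N^\star-\bar x_v)\bigr),
\end{equation*}
whose tail reproduces $\mu^\star$ and whose last entry is the terminal feedback of \eqref{eq:terminal_set}. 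The trajectory under $\tilde\mu$ coincides with $\xi_1^\star,\dots,\xi_N^\star$, so \eqref{eq:ocp_cstr2} still holds there, while \eqref{eq:terminal_set} guarantees both that the new terminal pair lies in $\mc{T}$ and that the appended step respects $\mc{Y}$. Hence $\tilde\mu$ is feasible for the OCP at $(x^+,v)$, giving $(x^+,v)\in\Gamma_N$; induction yields $(\phi(\ell,x,v),v)\in\Gamma_N$ for all $\ell\ge0$. Safety is then immediate, since feasibility at each step enforces $y_\ell=C\xi_0+D\mu_0^\star\in\mc{Y}$ via \eqref{eq:ocp_cstr2} with $\xi_0=\phi(\ell,x,v)$.

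\emph{Convergence.} Evaluating the cost of $\tilde\mu$ and invoking the closed-loop DARE identity $(A-BK)^TP(A-BK)-P=-(Q+K^TRK)$, the terminal cost cancels against the stage cost of the appended step, producing the descent inequality
\begin{equation*}
  V_N(x^+,v)-V_N(x,v)\le-\|x-\bar x_v\|_Q^2-\|\kappa(x,v)-\bar u_v\|_R^2.
\end{equation*}
Because $V_N\ge0$, summing shows the right-hand side is summable; with $R\succ0$ and the detectability of $(A,Q)$ from Assumption~\ref{ass:DARE}, a standard detectability argument (reconstructing the state deviation from the accumulated stage cost) upgrades this to $\phi(\ell,x,v)\to\bar x_v$.

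\emph{Asymptotic stability for $v\in\mc{R}_\epsilon$.} Convergence alone does not yield Lyapunov stability, and this is where the condition $v\in\mc{R}_\epsilon$ enters and where I expect the main difficulty. When $v\in\mc{R}_\epsilon$, the steady-state output $(CG_x+DG_u)v$ lies in $(1-\epsilon)\mc{Y}$, hence strictly inside $\mc{Y}$, so by construction of $\tilde O_\infty^\epsilon$ the equilibrium $\bar x_v$ lies in the interior of the terminal slice $\mc{X}(v)$ and therefore of the feasible slice $S_x(\Gamma_N,v)$. This interiority is precisely what is needed to bound $V_N(\cdot,v)$ above by a $\mc{K}_\infty$ function of $\|x-\bar x_v\|$ on a neighborhood of $\bar x_v$, since cheap (terminal-set) feasible solutions then exist for all nearby states. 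Combined with a quadratic lower bound $V_N(x,v)\ge\alpha\|x-\bar x_v\|^2$ coming from detectability and the descent inequality above, $V_N(\cdot,v)$ becomes a bona fide Lyapunov function and standard arguments certify asymptotic stability. The crux is thus the upper $\mc{K}_\infty$ bound, which can fail on the boundary of $\Gamma_N$ but is restored by the strict admissibility encoded in $\mc{R}_\epsilon$.
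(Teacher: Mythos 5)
Your proposal takes a genuinely different route from the paper: the paper's entire proof is a one-line citation --- since $v$ is held constant, the closed loop is a standard tracking MPC and all claims follow from Theorem~4.4.2 of \cite{goodwin2006constrained} --- whereas you reconstruct that textbook result from first principles. Your recursive-feasibility and safety arguments (shift-and-append with the terminal feedback $\bar u_v - K(\xi_N^\star - \bar x_v)$, using exactly the invariance and admissibility properties \eqref{eq:terminal_set}) and your convergence argument (terminal-cost cancellation via the Lyapunov form of the DARE, summability of the stage cost, then detectability of $(A,Q)$ to recover $\phi(\ell,x,v) \to \bar x_v$) are correct and are essentially the argument underlying the cited theorem; what the paper's approach buys is brevity, what yours buys is self-containment.

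However, your final stability step contains a genuine flaw: detectability does \emph{not} yield the claimed pointwise lower bound $V_N(x,v) \geq \alpha\|x - \bar x_v\|_2^2$. Concretely, take $Q = 0$ and $A$ nilpotent: then $(A,Q)$ is detectable (every mode is Schur-stable), the stabilizing DARE solution is $P = 0$ with $K = 0$, and for $v \in \mc{R}_\epsilon$ every $x$ in a neighborhood of $\bar x_v$ admits the feasible zero-cost input sequence $\mu_i \equiv \bar u_v$ (feasibility follows from the interiority you establish), so $V_N(\cdot,v) \equiv 0$ near $\bar x_v$. No positive-definite lower bound exists, even though the equilibrium is in fact asymptotically stable, so this step of your proof fails within the paper's assumptions. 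The standard repair is not to lower-bound $V_N$ but to augment it: detectability of $(A,Q)$ furnishes an IOSS-type Lyapunov function $\Lambda \succ 0$ satisfying $\Lambda(f(x,v)) - \Lambda(x) \leq -\rho\left(\|x - \bar x_v\|\right) + \sigma\left(\|x - \bar x_v\|_Q^2 + \|\kappa(x,v) - \bar u_v\|_R^2\right)$, and $W = V_N + \gamma\Lambda$ for suitable $\gamma > 0$ is then a bona fide Lyapunov function: positive definite by construction, and decreasing because your descent inequality supplies enough stage-cost decrease to absorb the $\sigma$ term. (Alternatively, strengthen the hypothesis to $Q \succ 0$, in which case your lower bound is immediate.) Your identification of the role of $v \in \mc{R}_\epsilon$ --- interiority of $(\bar x_v, v)$ in $\tilde O_\infty^\epsilon$, hence the local upper bound $V_N(x,v) \leq \|x - \bar x_v\|_P^2$ on the terminal slice --- is correct, is indeed the crux, and matches how the paper uses Theorem~2.1 of \cite{gilbert1991linear} in its Lemma~\ref{lmm:strictly_ss}.
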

\begin{proof}
Since the auxiliary reference $v$ is constant for  $\ell\geq 0$, the statement follows from \cite[Theorem 4.4.2]{goodwin2006constrained}.
\end{proof}

Theorem \ref{thm:MPC_stab} achieves the control objectives given $v=r$ and $x_0$ satisfying $(x_0,r)\in\Gamma_N$ with an ROA of $S_x(\Gamma_N,r)$. Its main limitation, however, lies in the fact that the OCP \eqref{eq:LMPC_OCP} is infeasible if $x_0$ cannot be steered to $\mathcal{X}(r)$ within $N$ steps. Although increasing the prediction horizon may seem like a suitable workaround, this solution may be inapplicable under real-time restrictions since the computational time required to solve \eqref{eq:LMPC_OCP} scales unfavorably with $N$. 
\begin{figure}
	\centering
	\includegraphics[width = \columnwidth]{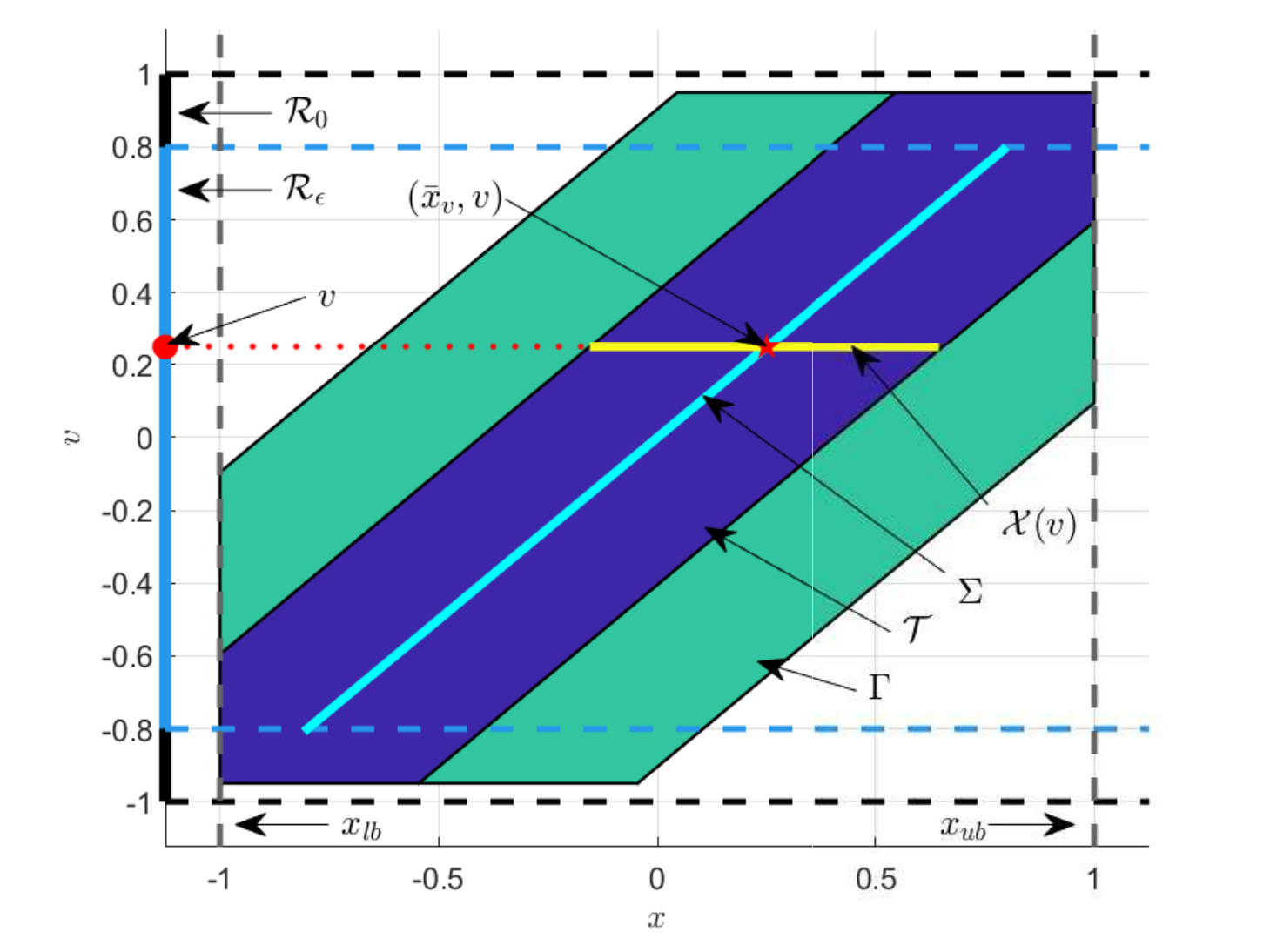}
	\caption{Illustration of the sets used in the paper for the discrete-time integrator $x_{k+1} = x_k + u_k$ subject to $|x_k| \leq 1$ and $|u_k|\leq 0.25$, given $\epsilon = 0.2$, $\mathcal{T} = \tilde{O}_{\infty}^{0.05}$ and $N=2$.}
	\label{fig:Lemma1(1)}
\end{figure} 
In view of extending the ROA, we define the set of strictly steady-state admissible equilibria
\begin{equation}\label{eq:Sigma}
   \Sigma = \{(x,v)~|~
	 x=G_x v,~ v \in\mc{R}_\epsilon \},
\end{equation}
and note the following.
\begin{lmm}\label{lmm:strictly_ss}
$\Sigma \subset \Int\Gamma_N$.
\end{lmm}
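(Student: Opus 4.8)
The plan is to exploit the inclusion $\mc{T}\subseteq\Gamma_N$ together with monotonicity of the interior, thereby reducing the claim to showing $\Sigma\subset\Int\mc{T}$. First I would verify $\mc{T}\subseteq\Gamma_N$: given $(x,v)\in\mc{T}$, the LQR-tracking input $\mu_i=\bar{u}_v-K(\xi_i-\bar{x}_v)$ drives the error $\xi_i-\bar{x}_v$ through the Schur-stable map $A-BK$, while the invariance and admissibility property \eqref{eq:terminal_set} of the terminal set guarantees $C\xi_i+D\mu_i\in\mc{Y}$ for $i\in\intrng{0}{N-1}$ and $(\xi_N,v)\in\mc{T}$. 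Hence this input is feasible for \eqref{eq:LMPC_OCP}, so $(x,v)\in\Gamma_N$. Since the interior operation is monotone under set inclusion, it then suffices to place every equilibrium of $\Sigma$ in $\Int\mc{T}$.

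Next I would use that $\mc{T}=\tilde{O}_\infty^\epsilon$ is finitely determined, so it is a polyhedron whose defining inequalities are affine — hence continuous — functions of $(x,v)$, and any point at which all of these inequalities hold strictly belongs to $\Int\mc{T}$. Fix $(\bar{x}_{v_0},v_0)\in\Sigma$, i.e. $x=G_x v_0$ with $v_0\in\mc{R}_\epsilon$. Along the LQR-tracking trajectory started at $\bar{x}_{v_0}$ the error is identically zero, so every transient output equals the steady-state value $(CG_x+DG_u)v_0$. Because $v_0\in\mc{R}_\epsilon$ this value lies in $(1-\epsilon)\mc{Y}$, and since $0\in\Int\mc{Y}$ with $\mc{Y}$ convex the scaling gives $(1-\epsilon)\mc{Y}\subset\Int\mc{Y}$; thus all transient output inequalities hold with strict slack. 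Continuity in $(x,v)$ then yields a full-dimensional neighborhood on which these inequalities remain valid, covering perturbations in the $x$-direction.

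The remaining — and main — difficulty is the persistency (steady-state) inequality that confines the reference to the $v$-section $\Pi_v\mc{T}$ of the terminal set. Here it is essential that the reference tightening is strictly stronger than the tightening built into the terminal set, so that $\mc{R}_\epsilon\subset\Int(\Pi_v\mc{T})$ (in the example of Fig.~\ref{fig:Lemma1(1)} this is exactly the gap between $\epsilon=0.2$ and $\mc{T}=\tilde{O}_\infty^{0.05}$). Granting this, $v_0$ is strictly interior to $\Pi_v\mc{T}$, the steady-state inequalities are also strict, and $v$ may be perturbed freely. Combining the two directions, all defining inequalities of $\mc{T}$ hold strictly at $(\bar{x}_{v_0},v_0)$, which therefore lies in $\Int\mc{T}$, and with $\mc{T}\subseteq\Gamma_N$ this gives $\Sigma\subset\Int\mc{T}\subseteq\Int\Gamma_N$. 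I expect verifying $\mc{R}_\epsilon\subset\Int(\Pi_v\mc{T})$ to be the crux, since it is precisely what prevents boundary references of $\mc{R}_\epsilon$ from landing on $\partial\Gamma_N$ and is the reason two distinct tightening levels must appear in the construction.
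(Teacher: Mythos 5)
Your proposal has the same skeleton as the paper's proof: reduce the claim to $\Sigma\subset\Int\mc{T}$ and conclude via $\mc{T}\subseteq\Gamma_N$ and monotonicity of the interior, i.e.\ $\Sigma\subset\Int\mc{T}\subseteq\Int\Gamma_N$. The differences are in how the two ingredients are handled. You verify $\mc{T}\subseteq\Gamma_N$ explicitly (LQR-tracking input plus the invariance/admissibility property \eqref{eq:terminal_set}), which the paper leaves implicit; and where the paper disposes of $\Sigma\subset\Int\mc{T}$ in one line by citing \cite[Theorem 2.1]{gilbert1991linear} for $(\bar x_v,v)\in\Int\tilde{O}_\infty^\epsilon$, you try to prove it from the polyhedral structure of the finitely determined set. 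Your treatment of the transient rows is complete and correct (zero tracking error at equilibrium, and $(1-\epsilon)\mc{Y}\subset\Int\mc{Y}$ since $\mc{Y}$ is convex with $0\in\Int\mc{Y}$). However, the steady-state rows are exactly the step you declare to be the crux and then \emph{grant}: $\mc{R}_\epsilon\subset\Int(\Pi_v\mc{T})$. So, as written, your argument is incomplete at precisely the point where the paper's argument is a citation.

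You should have closed that step rather than granting it, because under the reading you yourself propose it is a one-line repetition of your scaling argument: if $\mc{T}=\tilde{O}_\infty^{\epsilon'}$ with $\epsilon'<\epsilon$, then for $v_0\in\mc{R}_\epsilon$ the steady-state output satisfies $(CG_x+DG_u)v_0\in(1-\epsilon)\mc{Y}=\tfrac{1-\epsilon}{1-\epsilon'}\,(1-\epsilon')\mc{Y}\subset\Int\bigl[(1-\epsilon')\mc{Y}\bigr]$, so the steady-state inequalities of $\mc{T}$ are also strictly slack at $(\bar x_{v_0},v_0)$ and the proof is finished. That said, your diagnosis that two distinct tightening levels are \emph{necessary} is correct and is sharper than the paper's own presentation: if the tightenings are literally equal, as the paper's notation $\mc{T}=\tilde{O}_\infty^{\epsilon}$ suggests, then $\Pi_v\Gamma_N\subseteq\mc{R}_\epsilon$, and any $v_0\in\mc{R}_\epsilon$ whose steady-state output lies on $\partial\bigl[(1-\epsilon)\mc{Y}\bigr]$ yields a point $(\bar x_{v_0},v_0)\in\Sigma$ that arbitrarily small perturbations of $v_0$ push out of $\Gamma_N$, so this point lies on $\partial\Gamma_N$ and the lemma fails there. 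The gap between $\epsilon=0.2$ and $\tilde{O}_\infty^{0.05}$ in Figure~\ref{fig:Lemma1(1)} confirms that the intended construction is the two-level one you identified; the paper's one-line proof silently relies on this margin when invoking the cited theorem.
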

\begin{proof}
Since $(\bar x_v,v) \in \Int \tilde{O}_\infty^\epsilon$ \cite[Theorem 2.1]{gilbert1991linear}, the result follows from $\Sigma\subset\Int\mc{T}\subseteq\Int\Gamma_N$
\end{proof}

Figure~\ref{fig:Lemma1(1)} depicts all the sets defined in this section. In the next section, we describe an \emph{add-on} unit that expands the closed-loop domain of attraction without extending the prediction horizon or modifying the MPC formulation.

\section{The Feasibility Governor}  \label{ss:fg_design}
The MPC feedback policy \eqref{eq:MPC_feedback} is stabilizing only if the terminal set associated with the target equilibrium is $N$-step reachable from the current state. Intuitively, this limitation can be overcome by selecting a sequence of intermediate targets that are pair-wise reachable. This paper formalizes this idea by redefining the auxiliary reference $v$ as a time-varying system $v_k$ to ensure $(x_k,v_k)\in\Gamma_N,~\forall k\in\ints$, and $v_{k}=r$ for sufficiently large $k\in\ints$. The resulting control architecture is displayed in Figure~\ref{fig:fg_block_diagram}.

\subsection{Governor Design}
The idea behind the FG is straightfoward: modify the target reference as little as needed to ensure that the OCP remains feasible. Drawing inspiration from the Command Governor (CG) literature \cite{garone2017reference,bemporad1997nonlinear}, the FG policy can be computed via by solving
\begin{equation} \label{eq:fg_opt_problem} 
	g(x,r) = \argmin_{v\in \mc{R}_\epsilon}~\left\{ \|v - r\|_2^2~|~(x,v) \in \Gamma_N\right\}.
\end{equation}

Given a measurement $x_k$, the FG computes a virtual reference $v_k = g(x_k,r)$ that is passed to the MPC controller to obtain a control action $u_k = \kappa(x_k,v_k)$. 

\subsection{Properties}

Given system \eqref{eq:LTI_system} and the feedback policy \eqref{eq:MPC_feedback}, the FG is recursively feasible (see Theorem~\ref{thm:rec_feas}), guarantees constraint satisfaction (see Theorem~\ref{thm:rec_feas}), renders the point $x^*_r = G_x v^*_r$ asymptotically stable (see Theorem~\ref{thm:asymptotic_convergence}), and exhibits finite time convergence of $v_k \to v^*_r$ (see Theorem~\ref{thm:finite_time_convergence}).

Moreover, the FG expands the ROA of the closed-loop system from $\mc{D}_{MPC} = S_x(\Gamma_N,r^\star)$, i.e., the set of states from which it is possible to reach $\mc{X}(r^\star)$ in $N$-steps, to
\begin{equation}
	\mc{D}_{FG} = \bigcup_{v\in \mc{R}_\epsilon} S_x(\Gamma_N,v),
\end{equation}
i.e., the set of states from which it is possible to reach $\mc{X}(v)$ for \textit{any} $v\in \mc{R}_\epsilon$. In particular, the addition of the FG guarantees safe transitions between any $r_1,r_2 \in \mc{R}_\epsilon$. The differences between $\mc{D}_{MPC}$ and $\mc{D}_{FG}$ are illustrated in Figure~\ref{fig:2D_traj} for the double integrator example in Section~\ref{ss:double_integrator}.

\begin{figure}
	\centering
	\includegraphics[width = \columnwidth]{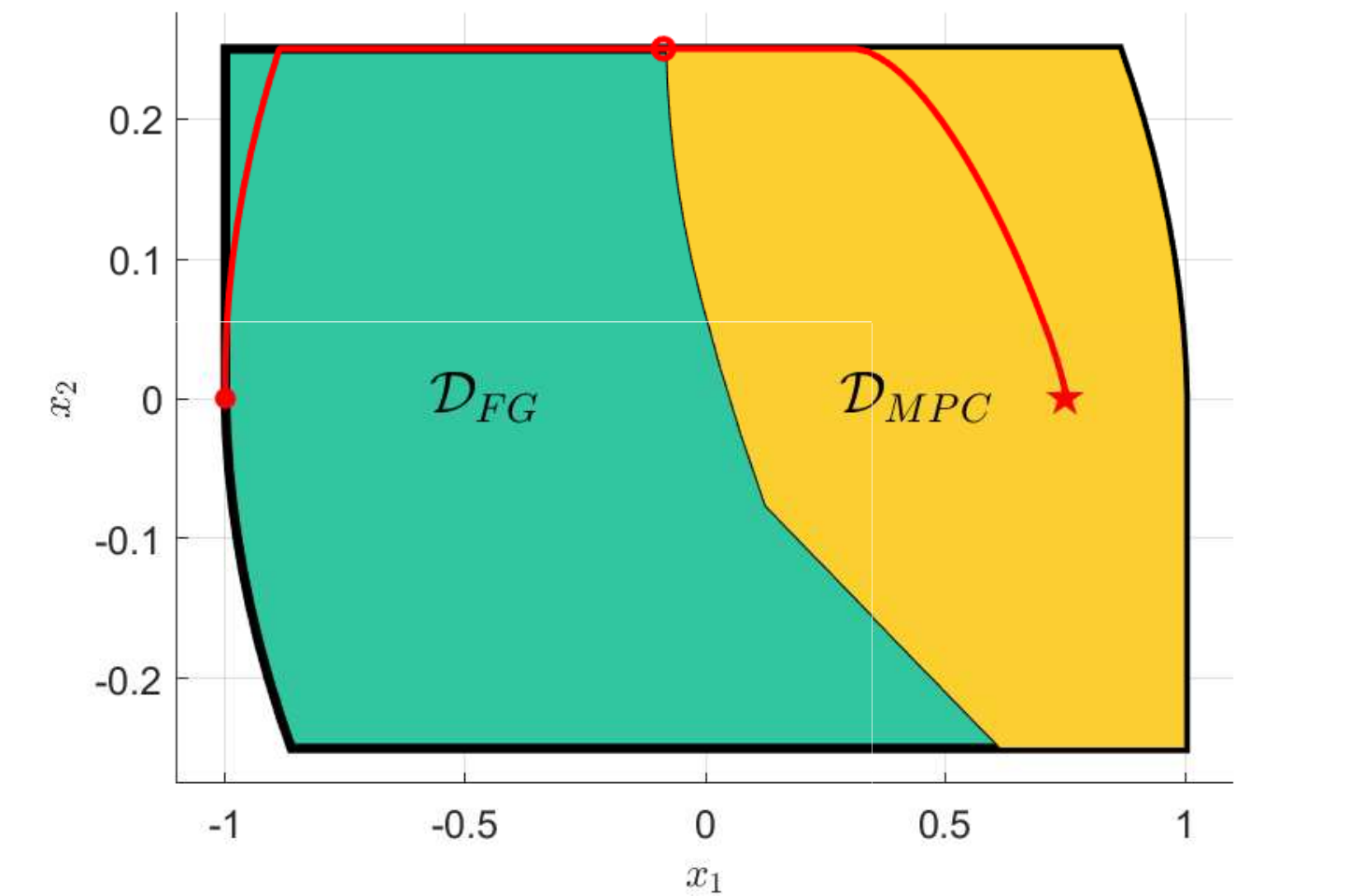}
	\caption{For the double integrator example in Section~\ref{ss:double_integrator}, the region of attraction of the combined MPC + FG feedback law (green + yellow) is significantly larger than that of the MPC controller alone (yellow), given a prediction horizon of $N=10$. The red trajectory is from the double integrator example in Section \ref{ss:double_integrator}.}
	\label{fig:2D_traj}
\end{figure}

\subsection{Implementation} \label{ss:feasible_set_computation}
Since $\Gamma_N$ and $\mc{R}_\epsilon$ are polyhedral, \eqref{eq:fg_opt_problem} is a strongly convex quadratic program (QP), and can therefore be solved in real-time. %Convex QPs can be solved efficiently and reliably using active set, interior point, proximal gradient, or generalized Newton methods. 
Moreover, since the FG problem typically has a small number of variables and many constraints, dual active-set methods \cite{goldfarb1983numerically} are particularly well suited for solving \eqref{eq:fg_opt_problem} efficiently and reliably due to the limited number of active constraints at any given time.

Implementation of the FG also requires a half-space representation of the feasible set $\Gamma_N$. To compute such a representation, note that the OCP \eqref{eq:LMPC_OCP} is a QP and can be written in the condensed form \cite{liaomc2019fbstab}
\begin{subequations} \label{eq:Condensed_LMPC_OCP}
\begin{alignat}{2} 
\underset{\mu}{\mathrm{min.}}&\quad \frac12&& \mu^T H \mu + \mu^T W\theta\\
\mathrm{s.t.}& && M \mu + L \theta \leq b,
\end{alignat}
\end{subequations}
with parameter $\theta = (x,v)$. The feasible set \eqref{eq:feasible_set} can therefore be expressed as
\begin{equation} \label{eq:gamma_block}
	\Gamma_N = \proj{\theta} \{(\mu,\theta)~|~ M \mu + L \theta \leq b\}.
\end{equation}

Several toolboxes are available for performing polyhedral calculus (e.g., projections, images, inverse images etc.). In this paper, we compute $\Gamma_N$ using the \texttt{bensolve tools} \cite{ciripoi2018calculus} package. Unfortunately, the complexity of computing $\Gamma_N$ is dominated by the projection operation. The projection is performed offline but can quickly become intractable even for moderately sized systems as all known projection algorithms suffer from the curse of dimensionality \cite{huynh1992practical}. Thus, the offline computation of $\Gamma_N$ can quickly become intractable as the size of the state vector, input vector, reference, or prediction horizon grows. 

\section{Theoretical Analysis} \label{ss:theoretical_properties}
This section analyzes the properties of the closed-loop system under the combined FG and MPC policy. We begin with some definitions. The feasible set of the FG is 
\begin{equation}
	\Lambda = \Gamma_N \cap (\reals^{n_x}\times \mc{R}_\epsilon).
\end{equation}
% We compactly denote the action of the FG as \dlm{Could remove}
% \begin{equation} \label{eq:g_def}
% 	g(x,r) = \argmin_{v\in S_v(\Lambda,x)}~\|v - r\|_2^2.
% \end{equation}
The closed-loop dynamics of \eqref{eq:LTI_system} under the combined FG and MPC feedback law are 
\begin{equation} \label{eq:closed-loop-fg-system}
  x_{k+1} = f(x_k,g(x_k,r)),
\end{equation} 
and the constrained output is
\begin{equation}
	y_k = Cx_k + D \kappa(x_k,v_{k}).
\end{equation}

\subsection{Safety and Recursive Feasibility}
The following theorem provides sufficient conditions under which the Feasibility Governor (FG) is recursively feasible and achieves the \textit{Safety} objective.

\begin{thm}[Safety \& Recursive Feasibility]\label{thm:rec_feas}
Given Assumptions~\ref{ass:stabilizable}--\ref{ass:DARE}, consider the closed-loop dynamics \eqref{eq:closed-loop-fg-system}. Given $x_0 \in \proj{x}\Lambda$, then  $(x_k,v_k)\in\Lambda$ and $y_k \in \mc{Y}$, $\forall k \in\ints$.
\end{thm}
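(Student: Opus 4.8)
The plan is to establish recursive feasibility by induction on the timestep $k$, then derive safety as an immediate consequence. The inductive hypothesis is that $(x_k, v_k) \in \Lambda$, which holds at $k=0$ because the FG optimization \eqref{eq:fg_opt_problem} is feasible precisely when $x_0 \in \proj{x}\Lambda$: by definition of $\Lambda = \Gamma_N \cap (\reals^{n_x}\times\mc{R}_\epsilon)$, there exists some $v_0 \in \mc{R}_\epsilon$ with $(x_0,v_0)\in\Gamma_N$, so the constraint set of \eqref{eq:fg_opt_problem} is nonempty and the strongly convex QP returns a well-defined minimizer $v_0 = g(x_0,r)$ satisfying $(x_0,v_0)\in\Lambda$.

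For the inductive step, I would assume $(x_k,v_k)\in\Lambda$ and show $(x_{k+1},v_{k+1})\in\Lambda$. The key observation is that holding the reference fixed at $v_k$ is always a feasible (though generally suboptimal) choice for the FG at time $k+1$. Concretely, since $(x_k,v_k)\in\Lambda\subseteq\Gamma_N$, Theorem~\ref{thm:MPC_stab} applies with constant reference $v_k$ and guarantees the MPC-governed successor state satisfies $(\phi(1,x_k,v_k),v_k)\in\Gamma_N$. But $x_{k+1}=f(x_k,v_k)=\phi(1,x_k,v_k)$ by \eqref{eq:closed-loop-fg-system} and \eqref{eq:closed_loop_dynamics}, so $(x_{k+1},v_k)\in\Gamma_N$. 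Since we also have $v_k\in\mc{R}_\epsilon$ by the inductive hypothesis, the pair $(x_{k+1},v_k)\in\Lambda$, which shows that $v=v_k$ is a feasible point for the FG problem \eqref{eq:fg_opt_problem} evaluated at $x_{k+1}$. Therefore the QP at time $k+1$ is feasible, its minimizer $v_{k+1}=g(x_{k+1},r)$ exists, and by definition of the feasible region of \eqref{eq:fg_opt_problem} it satisfies $v_{k+1}\in\mc{R}_\epsilon$ and $(x_{k+1},v_{k+1})\in\Gamma_N$, i.e.\ $(x_{k+1},v_{k+1})\in\Lambda$. This closes the induction and establishes $(x_k,v_k)\in\Lambda$ for all $k\in\ints$.

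Finally, safety follows directly: for each $k$, we have $(x_k,v_k)\in\Lambda\subseteq\Gamma_N$, so the OCP \eqref{eq:LMPC_OCP} admits a solution whose first control $\kappa(x_k,v_k)$ satisfies the stage constraint \eqref{eq:ocp_cstr2} at $i=0$, namely $C\xi_0 + D\mu_0^\star = Cx_k + D\kappa(x_k,v_k)\in\mc{Y}$. Since $y_k = Cx_k + D\kappa(x_k,v_k)$, we conclude $y_k\in\mc{Y}$ for all $k\in\ints$.

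The main obstacle is conceptual rather than computational: one must recognize that recursive feasibility of the \emph{governed} system reduces to invariance of $\Gamma_N$ under the fixed-reference MPC law, which is exactly what Theorem~\ref{thm:MPC_stab} provides. The crucial point is that the previous reference $v_k$ remains an admissible candidate at the next step, so optimality of $v_{k+1}$ never jeopardizes feasibility. I would also take care to confirm that membership in $\mc{R}_\epsilon$ is preserved, since the governed trajectory only certifies the $\Gamma_N$ component via Theorem~\ref{thm:MPC_stab}; the $\mc{R}_\epsilon$ component is inherited from the fact that the FG always selects $v_{k}\in\mc{R}_\epsilon$ by construction of its constraint set.
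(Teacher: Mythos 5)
Your proof is correct and follows essentially the same route as the paper: induction where the previous reference $v_k$ serves as a feasible candidate for the FG at time $k+1$, with Theorem~\ref{thm:MPC_stab} supplying invariance of $\Gamma_N$ under the fixed-reference MPC law, and safety following from constraint admissibility of the feasible pairs. If anything, you are slightly more careful than the paper, which asserts $(x_{k+1},v_k)\in\Lambda$ directly from Theorem~\ref{thm:MPC_stab} (that theorem only yields membership in $\Gamma_N$), whereas you explicitly note that the $\mc{R}_\epsilon$ component is inherited from the inductive hypothesis.
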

\begin{proof}
The proof is by induction. Given $x_0\in \proj{x}\Lambda$, the FG optimization problem is feasible at time $k=0$ and $(x_0,v_0)\in \Lambda$. Next, given $(x_k,v_k)\in \Lambda$, Theorem~\ref{thm:MPC_stab} implies $(f(x_k,v_k),v_k) = (x_{k+1},v_k) \in \Lambda$. Moreover, since $(x_{k+1},v_k) \in \Lambda$ it follows that \eqref{eq:fg_opt_problem} is feasible at timestep $k+1$ and thus $(x_{k+1},v_{k+1}) = (x_{k+1},g(x_{k+1},r)) \in \Lambda$. Therefore, by induction, $(x_k,v_k)\in \Lambda \subset \Gamma_N,~\forall k\in\ints$. Finally, $y_k \in \mc{Y},~\forall k\in\ints$ follows from $(x_k,v_k)\in \Gamma_N$ and Theorem~\ref{thm:MPC_stab}.
\end{proof}

\subsection{Convergence and Stability}
Having established recursive feasibility, we now consider convergence and stability, starting with asymptotic stability. Throughout the section the reference is constant so we suppress all dependencies on $r$ to simplify the notation. 

The proof is via the invariance principle \cite[Theorem 2]{hurt1967some}, so we begin by introducing the Lyapunov function candidate
\begin{equation} \label{eq:lyapunov_injective}
	V(v) = \|v - r\|_2^2,
\end{equation}
and the increment function
\begin{equation}
	\Delta V(x,v) = V(g(f(x,v))) - V(v).
\end{equation}
The first step is to characterize the set
\begin{equation}
	\Omega = \{(x,v)\in \Lambda~|~\Delta V(x,v) = 0\}.
\end{equation}
\begin{lmm} \label{lmm:increment_root}
Under Assumptions~\ref{ass:stabilizable}--\ref{ass:DARE}, there exists $m>0$ such that
\begin{equation}
	\Delta V(x,v) \leq - m\|g(f(x,v)) - v\|^2 \leq 0,
\end{equation}
for all $(x,v)\in \Lambda$ and thus
\begin{align*}
	\Omega &= \{(x,v)\in\Lambda~|~g(f(x,v)) = v\}.
\end{align*}
\end{lmm}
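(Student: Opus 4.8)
The plan is to exploit the strong convexity of the FG objective together with the first-order optimality of its minimizer over the convex feasible set. Write $x^+ = f(x,v)$ for the successor state. The first ingredient is recursive feasibility: the induction in Theorem~\ref{thm:rec_feas} shows that $(f(x,v),v) = (x^+,v)\in\Lambda$, so the old reference $v$ remains feasible at $x^+$, i.e., $v$ lies in the convex set $\{w\in\mc{R}_\epsilon~|~(x^+,w)\in\Gamma_N\}$ over which $g(x^+) = g(f(x,v))$ is the minimizer of $V$. This guarantees both that $g(f(x,v))$ is well-defined and that $v$ is a feasible competitor in its defining optimization.

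The second ingredient is a quadratic growth bound. Since $V(v) = \|v-r\|_2^2$ has constant Hessian $2I$, it is $2$-strongly convex, and the variational characterization of the minimizer $g(x^+)$ over the convex feasible set gives $\langle\nabla V(g(x^+)),\,v - g(x^+)\rangle\geq 0$. Combining strong convexity with this inequality yields
\[
   V(v) - V(g(f(x,v)))\geq \|v - g(f(x,v))\|^2 ,
\]
which rearranges directly to $\Delta V(x,v)\leq -\|g(f(x,v)) - v\|^2$, establishing the claim with $m = 1$ (any $m\in(0,1]$ works equally well). The right-hand side is manifestly nonpositive, so $\Delta V(x,v)\leq 0$ on all of $\Lambda$.

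Finally, the characterization of $\Omega$ follows by sandwiching. The displayed inequality forces $\Delta V(x,v) = 0$ to imply $\|g(f(x,v)) - v\| = 0$, hence $g(f(x,v)) = v$; conversely, if $g(f(x,v)) = v$ then $V(g(f(x,v))) = V(v)$ and so $\Delta V(x,v) = 0$ immediately from the definition of the increment. This yields the stated equality $\Omega = \{(x,v)\in\Lambda~|~g(f(x,v)) = v\}$. There is no substantive obstacle here; the only point requiring care is confirming that $v$ is feasible at the successor state so that the strong-convexity argument can be applied—but this is precisely what the recursive-feasibility step of Theorem~\ref{thm:rec_feas} delivers.
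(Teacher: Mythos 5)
Your proposal is correct and follows essentially the same route as the paper's proof: both establish that $(f(x,v),v)\in\Lambda$ so the old reference is a feasible competitor, then combine strong convexity of $V$ with the first-order optimality condition $\nabla V(g(x^+))^T(v-g(x^+))\geq 0$ of the minimizer over the convex feasible set to get the decrement bound, from which the characterization of $\Omega$ follows. Your only additions are cosmetic refinements—making the constant $m=1$ explicit from the Hessian $2I$ and spelling out the trivial converse direction in the $\Omega$ equality—which the paper leaves implicit.
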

\begin{proof}
The function $V$ is strongly convex and continuously differentiable. Thus, there exists $m > 0$ such that
\begin{equation*}
	V(v) \geq V(v') + \nabla V(v')^T (v - v') + m\|v - v'\|_2^2
\end{equation*}
for all $v',v \in \reals^{n_v}$. Letting $x^+ = f(x,v)$ and $v' = g(x^+) \in S_v(\Lambda,x^+)$, we have that
\begin{equation}
	\nabla V(v')^T(v - v') \geq 0,~~\forall v \in S_v(\Lambda,x^+).
\end{equation}
By using $v' = g(f(x,v))$, and recalling that $(x,v)\in \Lambda$ implies $(x^+,v) \in \Lambda$ we obtain
\begin{equation*}
 	V(g(f(x,v))) - V(v) \leq - m\|g(f(x,v)) - v\|^2 \leq 0,
\end{equation*} 
for all $(x,v)\in \Lambda$. Thus $\Delta V(x,v) \leq 0$ for all $ (x,v)\in \Lambda$ and
$\Omega = \{(x,v)\in \Lambda~|~g(f(x,v)) = v\}$.
\end{proof}

The next Lemma shows that, if $x$ is close enough to $\bar x_v$, then the FG is able to make progress towards $r^\star$.
\begin{lmm}\label{lmm:convergence}
Given Assumptions \ref{ass:stabilizable}--\ref{ass:DARE}, define
\begin{equation}\label{eq:ss_tube}
    \mc{B}_\delta(\Sigma)=\{(x,v)~|~v\in \mc{R}_\epsilon,~\|x- G_x v\| \leq\delta\},
\end{equation}
where $\Sigma = \{(x,v)~|~x = G_x v,~v\in \mc{R}_\epsilon\}$ is the equilibrium manifold. Then, there exists $\delta^\star>0$ such that $g(x) \neq v$ for all $(x,v)\in\mc{B}_{\delta}(\Sigma)$, $v\neq r^\star$ and $\delta\in [0,\delta^\star]$.
\end{lmm}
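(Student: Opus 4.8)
The plan is to reduce the claim to two facts: a local \emph{optimality-condition} argument showing that, whenever the feasibility constraint $(x,v)\in\Gamma_N$ is locally inactive, the governor $g$ from \eqref{eq:fg_opt_problem} necessarily returns $r^\star$; and a \emph{uniform interiority} argument showing that the entire tube $\mc{B}_{\delta^\star}(\Sigma)$ lies in $\Int\Gamma_N$ for $\delta^\star$ small enough.

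First I would dispatch the optimality step. The governor $g(x)=\argmin_{w}\{\|w-r\|_2^2 : w\in\mc{R}_\epsilon,\ (x,w)\in\Gamma_N\}$ is a strongly convex QP over the polyhedron $\mc{R}_\epsilon\cap S_v(\Gamma_N,x)$, hence has a unique minimizer. Suppose $(x,v)\in\Int\Gamma_N$. Then an open ball around $(x,v)$ lies in $\Gamma_N$, which forces $v\in\Int S_v(\Gamma_N,x)$, so the normal cone of the slice at $v$ is $\{0\}$. By the polyhedral normal-cone sum rule, the normal cone of the feasible set at $v$ equals that of $\mc{R}_\epsilon$ at $v$, and the first-order condition for $g(x)=v$ collapses to $-2(v-r)\in N_{\mc{R}_\epsilon}(v)$. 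This is exactly the optimality condition for the Euclidean projection of $r$ onto the convex set $\mc{R}_\epsilon$, whose unique solution is $r^\star$. Thus whenever $(x,v)\in\Int\Gamma_N$, $v\in\mc{R}_\epsilon$, and $v\neq r^\star$, we must have $g(x)\neq v$.

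It then remains to produce a single $\delta^\star>0$ with $\mc{B}_{\delta^\star}(\Sigma)\subseteq\Int\Gamma_N$; the lemma follows since $\mc{B}_\delta(\Sigma)\subseteq\mc{B}_{\delta^\star}(\Sigma)$ for every $\delta\in[0,\delta^\star]$. Lemma~\ref{lmm:strictly_ss} already gives $\Sigma\subset\Int\Gamma_N$ pointwise, so the content is upgrading this to a \emph{uniform} margin $\dist(\Sigma,\partial\Gamma_N)\geq\rho>0$. I would extract $\rho$ from the $\epsilon$-tightening built into $\mc{T}=\tilde{O}_\infty^\epsilon$: for every $v\in\mc{R}_\epsilon$ the equilibrium $(G_x v,v)$ satisfies the defining inequalities of $\mc{T}$ with a per-facet slack bounded below by a positive multiple of $\epsilon$, via \cite[Theorem 2.1]{gilbert1991linear} together with the $(1-\epsilon)$-scaling in \eqref{eq:Veps}. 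For convex $\mc{T}$ this slack bound yields a ball $B((G_x v,v),\rho)\subseteq\mc{T}\subseteq\Gamma_N$, hence $\dist(\Sigma,\partial\Gamma_N)\geq\dist(\Sigma,\partial\mc{T})\geq\rho$. Taking any $\delta^\star\in(0,\rho)$ and combining with the optimality step closes the argument.

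The main obstacle is this last, uniform step. Pointwise interiority is immediate, but making the margin uniform over $\mc{R}_\epsilon$ is delicate when $\mc{R}_\epsilon$ is unbounded, since one cannot simply invoke continuity on a compact set. I expect to handle it through the explicit slack estimate above — showing the per-facet margin of $(G_x v,v)$ in $\mc{T}$ does not vanish as $\|v\|\to\infty$, which is precisely what the deliberate $\epsilon$-gap between $\mc{R}_\epsilon$ and the constraints defining $\tilde{O}_\infty^\epsilon$ guarantees — or, in the special case where $(CG_x+DG_u)$ has full column rank so that $\mc{R}_\epsilon$ is compact, by a direct continuity-plus-compactness argument.
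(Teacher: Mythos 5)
Your proof is correct, but it reaches the conclusion by a genuinely different route than the paper, so a comparison is worthwhile. Both arguments share the same skeleton: (i) upgrade Lemma~\ref{lmm:strictly_ss} to a uniform inclusion $\mc{B}_{\delta^\star}(\Sigma)\subset\Int\Gamma_N$, and (ii) show that interiority plus $v\neq r^\star$ forces $g(x)\neq v$. For step (ii) the paper works in the primal: it takes the ball $\mc{B}_\alpha(v)\subseteq S_v(\Gamma_N,x)$ supplied by interiority, moves from $v$ toward $r^\star$ along the ray $v'(t)=v+t(r^\star-v)$ with step $\beta=\alpha/\|v-r^\star\|$, and uses convexity of $\mc{R}_\epsilon$ together with strict convexity of $V$ to exhibit a governor-feasible point satisfying $V(v'(\beta))<V(v)$, whence $V(g(x))<V(v)$ and $g(x)\neq v$. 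You work in the dual: since $v\in\Int S_v(\Gamma_N,x)$, the slice constraint contributes nothing to the normal cone, so $g(x)=v$ would force $-2(v-r)\in N_{\mc{R}_\epsilon}(v)$, i.e.\ $v$ is the Euclidean projection of $r$ onto $\mc{R}_\epsilon$, i.e.\ $v=r^\star$. Both are sound; your KKT argument is arguably cleaner (no step-size bookkeeping), while the paper's construction explicitly produces the strict decrease $V(g(x))<V(v)$, although nothing downstream requires more than $g(x)\neq v$. On step (i) you are in fact more careful than the paper, which asserts the existence of $\delta^\star$ directly from the pointwise inclusion $\Sigma\subset\Int\Gamma_N$ with no compactness or margin argument; your identification of uniformity as the delicate point when $\mc{R}_\epsilon$ may be unbounded, and your plan to extract a uniform margin from the constraint slacks of $\mc{T}=\tilde O_\infty^\epsilon$, is the right repair. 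One caveat: the uniform slack bound of order $\epsilon\min_i h_i$ holds for the trajectory facets of $\tilde O_\infty^\epsilon$ (at an equilibrium the predicted output is constant and lies in $(1-\epsilon)\mc{Y}$), but the \emph{tightened steady-state} facets of $\tilde O_\infty^\epsilon$ have zero slack at $v\in\partial\mc{R}_\epsilon$ whenever the terminal-set tightening equals the $\epsilon$ of \eqref{eq:Veps}; your bound needs the terminal tightening to be strictly smaller (as in the paper's Figure~\ref{fig:Lemma1(1)}, where $\mc{T}=\tilde{O}_{\infty}^{0.05}$ while $\epsilon=0.2$). Since Lemma~\ref{lmm:strictly_ss} itself tacitly requires the same separation, this is a looseness inherited from the paper rather than a gap in your argument.
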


\begin{proof}
Lemma \ref{lmm:strictly_ss} implies that $\mc{B}_0(\Sigma) = \Sigma \subset\Int\Gamma_N$ hence there exists $\delta^\star>0$ such that $\mc{B}_{\delta}(\Sigma)\subset \Int\Gamma_N$ for all $\delta \in [0,\delta^\star]$. Since $\mc{B}_{\delta}(\Sigma)\subset \Int\Gamma_N$, for any $(x,v)\in \mc{B}_{\delta}(\Sigma)$ there exists $\alpha = \alpha(\delta) > 0$ such that $(x,v')\in \Gamma_N$ for all $v'\in \mc{B}_\alpha(v)$, or, equivalently, $\mc{B}_\alpha(v) \subseteq S_v(\Gamma_N,x)$.

The goal is to show that there exists $v' \in S_v(\Lambda,x)$ such that $V(v') < V(v)$ whenever $v\neq r^\star$. First, fix any $\delta \in [0,\delta^\star]$ and $\alpha = \alpha(\delta)$,  define the set $\mc{C}_\alpha = \mc{R}_\epsilon \cap \mc{B}_\alpha(v)$ and the ray $v'(t) = v + t(r^\star - v)$ with $t\geq 0$, and assume $v\neq r^\star$. It is evident that $v'(t) \in \mc{C}_\alpha$ for $t\in \left [0, \min\left(1,\beta\right)\right]$, with $\beta = \frac{\alpha}{\|v-r^\star\|}$, because $v$ and $r^\star$ are in the convex set $\mc{R}_\epsilon$ and $\|v'(t)-v\| \leq \alpha$ for all $t \leq \beta$. Further, $V$ is strongly convex and $\min_{s\in\mc{R}_\epsilon}V(s) = V(r^\star) \leq V(v)$ for all $v\in \mc{R}_\epsilon$, implying that
\begin{align*}
	V(v'(\beta)) &= V((1-\beta)v + \beta r^\star)\\
	         &< V(v) + \beta[V(r^\star) - V(v)]\\
	         &< V(v)
\end{align*}
for all $v\in \mc{R}_\epsilon \setminus r^\star$. Since $v'(\beta) \in \mc{C}_\alpha$ we have 
\begin{equation} \label{eq:lmm31}
	\min_{s\in \mc{C}_\alpha}~V(s) \leq V(v'(\beta)) < V(v),
\end{equation}
and, since $\mc{C}_\alpha \subseteq S_v(\Lambda,x) = \mc{R}_\epsilon \cap S_v(\Gamma_N,x)$,
\begin{equation} \label{eq:lmm32}
	V(g(x)) = \min_{s\in S_v(\Lambda,x)}~V(s)  \leq \min_{s\in \mc{C}_\alpha}~V(s).
\end{equation}
Combining \eqref{eq:lmm31} and \eqref{eq:lmm32} we conclude
\begin{equation}
	V(g(x)) < V(v)\quad \forall (x,v) \in \mc{B}_\alpha(\Sigma),~v\neq r^\star
\end{equation}
and thus, because $V$ is strongly convex, $g(x)\neq v$ for all $(x,v) \in \mc{B}_\delta(\Sigma),~v\neq r^\star$, and $\delta \in [0,\delta^*]$.
\end{proof}

Lemma~\ref{lmm:invariant_injective} characterizes the largest invariant set in $\Omega$, an essential step in the application of the invariance principle.
\begin{lmm} \label{lmm:invariant_injective}
Let Assumptions~\ref{ass:stabilizable}--\ref{ass:DARE} hold. Then the point
\begin{equation}
	\mc{I}_0 = \left\{(\bar x^\star_{r},r^\star)\right\},~~\bar x^\star_{r}= G_x r^\star
\end{equation}
is the largest invariant set in $\Omega \subset \Lambda$.
\end{lmm}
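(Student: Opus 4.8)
The plan is to establish two inclusions: that $\mc{I}_0$ is an invariant subset of $\Omega$, and that every invariant subset of $\Omega$ is contained in $\mc{I}_0$. Throughout I work with the extended closed-loop map $F(x,v) = (f(x,v),\,g(f(x,v)))$ on $\Lambda$, whose $v$-component records the reference selected by the FG, and I use the characterization $\Omega = \{(x,v)\in\Lambda : g(f(x,v)) = v\}$ from Lemma~\ref{lmm:increment_root}; note that on $\Omega$ the reference is frozen, since $(x,v)\in\Omega$ gives the next reference $g(f(x,v)) = v$. I read ``invariant'' in the two-sided sense $F(\mc{M}) = \mc{M}$ required by the invariance principle of \cite{hurt1967some}; this distinction turns out to be essential. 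To see $\mc{I}_0\subseteq\Omega$ is invariant I would check it is a fixed point of $F$: since $r^\star\in\mc{R}_\epsilon$, Lemma~\ref{lmm:strictly_ss} gives $(\bar x^\star_r,r^\star)\in\Sigma\subset\Int\Gamma_N$, so the equilibrium trajectory $\xi_i\equiv\bar x^\star_r$, $\mu_i\equiv\bar u_{r^\star}$ is feasible for the OCP and attains zero cost, hence is optimal; therefore $f(\bar x^\star_r,r^\star)=\bar x^\star_r$. Moreover $r^\star$ is feasible for \eqref{eq:fg_opt_problem} at $x=\bar x^\star_r$ and is the global minimizer of $V$ over $\mc{R}_\epsilon$, so by strong convexity $g(\bar x^\star_r)=r^\star$. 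Thus $F(\bar x^\star_r,r^\star)=(\bar x^\star_r,r^\star)$ with $\Delta V = 0$, giving $\mc{I}_0\subseteq\Omega$ and $F(\mc{I}_0)=\mc{I}_0$.

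Next, for an arbitrary invariant $\mc{M}\subseteq\Omega$ and any $(x_0,v_0)\in\mc{M}$, I would first pin down the reference. Because the forward orbit stays in $\Omega$, the reference is frozen at $v_0$ and $x_{k+1}=f(x_k,v_0)$; since $(x_0,v_0)\in\Lambda\subseteq\Gamma_N$ and $v_0\in\mc{R}_\epsilon$, Theorem~\ref{thm:MPC_stab} yields $x_k\to G_x v_0$, so $(x_k,v_0)\in\mc{B}_{\delta^\star}(\Sigma)$ for all large $k$. On $\Omega$ one also has $g(x_{k})=g(f(x_{k-1},v_0))=v_0$. If $v_0\neq r^\star$, then Lemma~\ref{lmm:convergence} forces $g(x_k)\neq v_0$ for large $k$, a contradiction; hence $v_0=r^\star$. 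Consequently $\mc{M}\subseteq\reals^{n_x}\times\{r^\star\}$, and $F$ acts on $\mc{M}$ as the MPC map $x\mapsto f(x,r^\star)$.

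The remaining and decisive step is to pin down the state. Here two-sided invariance is indispensable: the forward orbit of \emph{any} $(x,r^\star)\in\Lambda$ stays in $\Omega$ (by recursive feasibility and $g\equiv r^\star$ there), so $\Omega$ contains a large forward-invariant slab and forward invariance alone cannot isolate a single point. With $F(\mc{M})=\mc{M}$, the projection $\proj{x}\mc{M}$ is a compact (using closedness of $\Omega$ via continuity of $g$ and boundedness of $\Gamma_N$) two-sided invariant set of $x^+=f(x,r^\star)$ contained in the basin $S_x(\Gamma_N,r^\star)$ of the equilibrium $\bar x^\star_r$, which is asymptotically stable by Theorem~\ref{thm:MPC_stab}. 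Taking a strict Lyapunov function $W$ for this equilibrium (the MPC value function, rendered strictly decreasing off $\bar x^\star_r$ through the detectability in Assumption~\ref{ass:DARE}), the maximizer of $W$ over $\proj{x}\mc{M}$ is the image under $f(\cdot,r^\star)$ of some point of $\proj{x}\mc{M}$, and strict decrease forces that point, and hence the maximizer, to equal $\bar x^\star_r$. Thus $W$ is constant on $\proj{x}\mc{M}$, giving $\proj{x}\mc{M}=\{\bar x^\star_r\}$ and therefore $\mc{M}=\mc{I}_0$.

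The hard part is exactly this last step: eliminating the non-equilibrium states. The reference-freezing argument only shows trajectories \emph{converge} to $\bar x^\star_r$, which is not enough under mere forward invariance, so the proof must lean on full invariance together with the fact that an asymptotically stable equilibrium admits no nontrivial compact invariant set in its basin of attraction. The technical points to verify carefully are the continuity of $g$ (to ensure $\Omega$ and $\mc{M}$ are closed), the boundedness of $\Gamma_N$ (to ensure compactness of $\proj{x}\mc{M}$), and the strictness of the MPC Lyapunov decrease under the detectability hypothesis in Assumption~\ref{ass:DARE}.
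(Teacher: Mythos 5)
Your proposal is correct in substance and, at the decisive step, takes a genuinely different---and tighter---route than the paper. The first half is essentially the paper's argument in different packaging: the paper first shows $\mc{I}\subseteq\tilde\Omega$ (the set where the reference stays frozen) and uses Theorem~\ref{thm:MPC_stab} together with Lemma~\ref{lmm:convergence} to eliminate references $v\neq r^\star$, exactly as you do. The divergence is in how the \emph{state} is pinned down. The paper ``erodes'' $\mc{I}$ into the tubes $\mc{B}_\delta(\Sigma)$ for arbitrarily small $\delta>0$, arguing that since forward trajectories exit $\tilde\Omega\setminus\mc{B}_\delta(\Sigma)$, the largest invariant set must satisfy $\mc{I}\subseteq\tilde\Omega\cap\mc{B}_\delta(\Sigma)$, hence $\mc{I}\subseteq\Sigma$; only then does it resolve the reference on $\Sigma$. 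As you correctly observe, that inference is not valid for merely forward-invariant sets: the slab $\{(x,r^\star)\,:\,x\in S_x(\Lambda,r^\star)\}$ is forward invariant, contained in $\Omega$, and much larger than $\mc{I}_0$, so the lemma is only true under the two-sided reading $F(\mc{M})=\mc{M}$ used by the invariance principle of \cite{hurt1967some}. Even under that reading, ``orbits eventually enter $\mc{B}_\delta(\Sigma)$'' does not by itself exclude an orbit from visiting $\mc{B}_\delta(\Sigma)$ and later being found far from it; one needs Lyapunov stability or uniform attractivity to close this. Your replacement---$\proj{x}\mc{M}$ is a compact invariant set of $x^+=f(x,r^\star)$ inside the basin of an asymptotically stable equilibrium, and such a set must reduce to the equilibrium by the strict-Lyapunov/maximum argument---supplies precisely the ingredient the paper's erosion step leaves implicit, so on this point your proof is more rigorous than the paper's own.

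What each route buys: the paper's erosion argument is shorter and stays entirely within the objects already built (Lemmas~\ref{lmm:strictly_ss}--\ref{lmm:convergence}), but it papers over the forward-versus-two-sided invariance distinction; your route makes that distinction load-bearing, at the cost of extra hypotheses that you flag but do not discharge. Two of these deserve care. First, compactness of $\proj{x}\mc{M}$: boundedness of $\Gamma_N$ is never assumed in the paper, and closedness of $\Omega$ does not make an arbitrary invariant subset $\mc{M}$ closed---you should instead pass to the closure of $\proj{x}\mc{M}$ and rerun the maximum argument using continuity of $f(\cdot,r^\star)$ and of $W$, which works since the limit of preimages is again a preimage. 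Second, strictness of the Lyapunov decrease: under $Q\succeq 0$ the MPC value function itself is not strictly decreasing, so you need either the standard detectability modification or simply a converse Lyapunov theorem for the asymptotically stable equilibrium guaranteed by Theorem~\ref{thm:MPC_stab}. These are standard but genuine technical obligations; the paper's proof silently incurs comparable ones.
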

\begin{proof}
Let $\mc{I}\subset \Omega$ be the largest invariant set in $\Omega$. Our approach is to erode $\mc{I}$ in several steps.  

First, define the function $\phi : \mathbb{N}_{\geq 0} \times \Gamma_N \to \reals^{n_x}$ such that $\phi(\ell,x,v)$ denotes the solution of $x_{k+1} = f(x_k,v)$ starting from $x_0 = x$ at timestep $\ell \geq 0$. Let
\begin{equation*}
	\tilde \Omega = \{(x,v)\in \Lambda~|~g(\phi(\ell,x,v)) = v,~\forall \ell \geq 0\} \subseteq \Omega,
\end{equation*}
be the set of all initial conditions for which $v$ remains constant for all time. Since the set
\begin{equation*}
	\Omega \setminus \tilde \Omega = \{(x,v)\in \Lambda~|~\exists \ell > 0,~~ g(\phi(\ell,x,v)) \neq v\},
\end{equation*}
cannot be invariant, $\mc{I}\subseteq \tilde \Omega$. To see that $\Omega \setminus \tilde \Omega$ isn't invariant, consider any initial condition $(x_0,v_{-1})\in \Omega \setminus \tilde \Omega$ and consider the point $(x_{\ell-1},v_{\ell-1})$ on the resulting trajectory. By the definition of $\Omega \setminus \tilde \Omega$, $g(x_\ell,v_{\ell-1}) = g(f(x_{\ell-1},v_{\ell-1}),v_{\ell-1}) \neq v_{\ell-1}$, which implies $(x_{\ell-1},v_{\ell-1}) \notin \Omega$. Thus, for any $(x_0,v_{-1}) \in \Omega \setminus \tilde \Omega$, the resulting trajectory exits $\Omega \setminus \tilde \Omega$.

The inclusion $\mc{I}\subseteq \tilde \Omega$ implies that the auxiliary reference must remain constant in $\mc{I}$, so we focus on the evolution of $x$ given a constant $v$. Recall the set $\mc{B}_\delta(\Sigma)$ defined in \eqref{eq:ss_tube}. Due to Theorem~\ref{thm:MPC_stab}, for any $(x,v)\in \Lambda$ and $\delta > 0$, there exists a finite $t = t(\delta) \geq 0$ such that $(\phi(t,x,v),v) \in \mc{B}_{\delta}(\Sigma)$. Further, by Lemma~\ref{lmm:convergence}, there also exists $\delta^\star > 0$ such that $(\phi(t,x,v),v) \in \mc{B}_{\delta}(\Sigma)$ implies $g(\phi(t,x,v)) \neq v$ for any $\delta \in [0,\delta^\star]$ and $v\neq r^\star$. Thus, $\tilde\Omega \setminus \mc{B}_\delta(\Sigma)$ is not invariant and $\mc{I}\subseteq (\tilde \Omega \cap \mc{B}_\delta(\Sigma))$ for arbitrarily small $\delta > 0$. 

The inclusion $\mc{I}\subseteq (\tilde \Omega \cap \mc{B}_\delta(\Sigma))$ for arbitrarily small $\delta>0$ implies $\mc{I}\subseteq \mc{B}_0(\Sigma) = \Sigma$. % and thus
%\begin{equation}
%	\mc{I} = \{(x,v)~|~x = G_x v,~ g(x) = v\}.
%\end{equation}
Thanks to Lemma~\ref{lmm:strictly_ss}, $\Sigma \subset \Int \Gamma_N$, which implies
\begin{equation}
	g(G_x r^\star) = \argmin_{s\in \mc{R}_\epsilon}~V(s) = r^\star.
\end{equation}
Thus, $v = r^\star \implies g(G_xv) = v$. Next, applying Lemma~\ref{lmm:convergence} with $\delta = 0$, we have that $v\neq r^\star \implies g(G_xv) \neq v$ which is equivalent to $g(G_x v) = v \implies v = r^\star$. As a result, $g(G_xv) = v \Leftrightarrow v = r^\star$ and, therefore,
\begin{equation}
	\mc{I} = \{(x,v)~|~x = G_x v,~ v= r^\star\} = (\bar x_r^\star, r^\star),
\end{equation}
completing the proof.
\end{proof}

Having assembled all the components, we can now invoke the invariance principle to show asymptotic stability and finite-time convergence.
\begin{thm}[Asymptotic Stability] \label{thm:asymptotic_convergence}
Let Assumptions~\ref{ass:stabilizable}--\ref{ass:DARE} hold. Then, $(\bar x^\star_r, r^\star)$ is an asymptotically stable equilibrium point of the closed-loop system \eqref{eq:closed-loop-fg-system}, with domain of attraction $\mc{D} = \proj{x}\Lambda \times \reals^{n_v}$.
\end{thm}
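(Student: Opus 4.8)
The plan is to invoke the discrete-time invariance principle \cite{hurt1967some} to obtain attractivity on $\proj{x}\Lambda$, and then to supply a \emph{separate} local argument for Lyapunov stability, since the invariance principle by itself yields only convergence. First I would verify the hypotheses of \cite[Theorem 2]{hurt1967some} for the candidate $V$ in \eqref{eq:lyapunov_injective}: (i) the set $\Lambda$ is positively invariant under \eqref{eq:closed-loop-fg-system} by Theorem~\ref{thm:rec_feas}; (ii) $V$ is continuous and, by Lemma~\ref{lmm:increment_root}, satisfies $\Delta V(x,v)\leq 0$ on $\Lambda$; and (iii) the trajectories are precompact, which I would argue from boundedness of $\mc{R}_\epsilon$ (a consequence of the compactness of $\mc{Y}$ in Assumption~\ref{ass:constraint_set} and the injectivity of the steady-state map under Assumption~\ref{eq:G_invert}) together with confinement of the state to $\proj{x}\Lambda$ and the convergence guarantee of Theorem~\ref{thm:MPC_stab}. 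With these in hand, \cite[Theorem 2]{hurt1967some} guarantees that every trajectory starting in $\Lambda$ converges to the largest invariant subset of $\Omega$, which Lemma~\ref{lmm:invariant_injective} identifies as the single point $\mc{I}_0=\{(\bar x^\star_r,r^\star)\}$. Since every $x_0\in\proj{x}\Lambda$ produces a trajectory with $(x_k,v_k)\in\Lambda$ for all $k$ (Theorem~\ref{thm:rec_feas}), this establishes attractivity of $\bar x^\star_r$ from all of $\proj{x}\Lambda$, with $v_k\to r^\star$.

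Next I would establish Lyapunov stability by showing that near the target the FG locks onto $r^\star$, so that the closed loop locally reduces to the constant-reference MPC analyzed in Theorem~\ref{thm:MPC_stab}. By Lemma~\ref{lmm:strictly_ss}, $(\bar x^\star_r,r^\star)\in\Sigma\subset\Int\Gamma_N$, so there is a neighborhood $\mc{N}$ of $\bar x^\star_r$ on which $(x,r^\star)\in\Gamma_N$, i.e., $r^\star\in S_v(\Lambda,x)$. Because $r^\star=\argmin_{v\in\mc{R}_\epsilon}V(v)$ is the unconstrained minimizer of $V$ over $\mc{R}_\epsilon$, the inclusion $r^\star\in S_v(\Lambda,x)\subseteq\mc{R}_\epsilon$ forces $g(x)=r^\star$ for all $x\in\mc{N}$. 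Consequently, on $\mc{N}$ the dynamics \eqref{eq:closed-loop-fg-system} coincide with $x_{k+1}=f(x_k,r^\star)$, and since $r^\star\in\mc{R}_\epsilon$, the final clause of Theorem~\ref{thm:MPC_stab} renders $\bar x_{r^\star}=\bar x^\star_r$ asymptotically, hence Lyapunov, stable for this fixed-reference system, which is exactly Lyapunov stability of $\bar x^\star_r$ under the combined law.

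Finally I would combine the two pieces: attractivity from all of $\proj{x}\Lambda$ together with local Lyapunov stability yields asymptotic stability of $\bar x^\star_r$ with domain of attraction $\proj{x}\Lambda$; the $v$-component of $\mc{D}$ is all of $\reals^{n_v}$ because $v_0=g(x_0,r)$ is recomputed at $k=0$ independently of any initial reference value. I expect the main obstacle to be precisely this separation of attractivity from stability: the invariance principle delivers only convergence, so the crux is the local lock-on argument establishing $g\equiv r^\star$ on $\mc{N}$, which is what imports the MPC's intrinsic local asymptotic stability from Theorem~\ref{thm:MPC_stab} and upgrades convergence to full asymptotic stability. A secondary technical point is the rigorous justification of precompactness of the trajectories needed to legitimize the application of \cite[Theorem 2]{hurt1967some}.
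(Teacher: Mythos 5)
Your proposal is correct and reaches the paper's conclusion, but it decomposes the argument differently, and in a way that is arguably more rigorous than the paper's own proof. The paper's proof is a single application of the invariance principle: it notes that $V$ is continuous, bounded below, and nonincreasing along trajectories (Lemma~\ref{lmm:increment_root}), asserts Lyapunov stability directly from recursive feasibility ($x_0\in\proj{x}\Lambda$ implies $(x_k,v_k)\in\Lambda$ for all $k$), and then invokes Lemma~\ref{lmm:invariant_injective} together with \cite[Theorem 2]{hurt1967some} to conclude convergence to $(\bar x^\star_r,r^\star)$. Your attractivity step coincides with this, but you treat Lyapunov stability as a separate problem and resolve it with a local ``lock-on'' argument: since $(\bar x^\star_r,r^\star)\in\Sigma\subset\Int\Gamma_N$ (Lemma~\ref{lmm:strictly_ss}), there is a neighborhood of $\bar x^\star_r$ on which $r^\star\in S_v(\Lambda,x)$, hence $g(x)=r^\star$ by optimality of $r^\star$ over $\mc{R}_\epsilon$ and strong convexity of $V$, so the closed loop locally coincides with the fixed-reference MPC and inherits local asymptotic stability from the final clause of Theorem~\ref{thm:MPC_stab}. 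This addresses a step the paper glosses over: because $V$ depends only on $v$, its monotone decrease cannot by itself bound excursions of $x$, so invariance of $\Lambda$ plus $\Delta V\leq 0$ does not by itself deliver Lyapunov stability of the specific point $(\bar x^\star_r,r^\star)$. Your lock-on mechanism---which is essentially the same device the paper deploys later in the proof of Theorem~\ref{thm:finite_time_convergence}---is what legitimately upgrades convergence to asymptotic stability; the cost is a longer proof.

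One caveat: your precompactness justification leans on boundedness of $\mc{R}_\epsilon$, attributed to Assumption~\ref{eq:G_invert}; that assumption makes $v\mapsto\bar z_v$ injective, not $v\mapsto(CG_x+DG_u)v$, so boundedness of $\mc{R}_\epsilon$ does not follow as stated. A cleaner observation is that the $v$-component of any trajectory is automatically bounded because $V(v_k)$ is nonincreasing; boundedness of the $x$-component is not established by the paper either and remains the one technicality unaddressed on both sides.
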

\begin{proof}
Consider the candidate Lyapunov function $V:\mc{R}_\epsilon \to \reals$ defined in \eqref{eq:lyapunov_injective} and note that $V$ is continuous, bounded below, and, $V(v_{k+1}) \leq V(v_k)$ for all $(x_k,v_k)\in \Lambda$ by Lemma~\ref{lmm:increment_root}. Moreover, $x_0 \in \proj{x} \Lambda \implies (x_k,v_k) \in \Lambda$ for all $k \geq 0$ (Theorem~\ref{thm:rec_feas}), thus \eqref{eq:closed-loop-fg-system} is Lyapunov stable. Further, invoking Lemma~\ref{lmm:invariant_injective}, the largest invariant subset of $\Omega = \{(x,v)~|~ \Delta V(x,v) = 0\}$ is $(\bar x^\star_r,r^\star)$. Therefore, by the invariance principle \cite[Theorem 2]{hurt1967some}, $(x_k,v_k) \to (\bar x^\star_r,r^\star)$ as $k\to \infty$ for all $x_0 \in \proj{x}\Lambda$.
\end{proof}

\begin{thm}[Finite-time Convergence] \label{thm:finite_time_convergence}
Let Assumptions~\ref{ass:stabilizable}--\ref{ass:DARE} hold and consider the closed-loop system \eqref{eq:closed-loop-fg-system}. Then, $\forall x_0\in \proj{x}\Lambda$, there exists $t \geq 0$ such that $v_k = r^\star,~\forall k \geq t$.
\end{thm}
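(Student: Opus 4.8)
The plan is to combine the asymptotic convergence of Theorem~\ref{thm:asymptotic_convergence} with the local non-stationarity property of Lemma~\ref{lmm:convergence}, exploiting the fact that the closed-loop reference satisfies the identity $v_k = g(x_k)$ at every timestep by construction. Read through this identity, Lemma~\ref{lmm:convergence} says precisely that near the equilibrium manifold the FG map can leave $v$ in place only when $v = r^\star$; the whole argument then reduces to showing the trajectory eventually enters that neighborhood.

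First I would fix the radius $\delta^\star > 0$ supplied by Lemma~\ref{lmm:convergence} and restate it in contrapositive form: for every pair $(x,v) \in \mc{B}_{\delta^\star}(\Sigma)$, the equality $g(x) = v$ forces $v = r^\star$. Next I would invoke Theorem~\ref{thm:asymptotic_convergence}, which guarantees that for any $x_0 \in \proj{x}\Lambda$ the closed-loop trajectory satisfies $(x_k, v_k) \to (\bar x^\star_r, r^\star)$ with $\bar x^\star_r = G_x r^\star$. Because $\|x_k - G_x v_k\| \leq \|x_k - G_x r^\star\| + \|G_x\|\,\|r^\star - v_k\|$ and both terms on the right vanish as $k \to \infty$, while $v_k \in \mc{R}_\epsilon$ holds for all $k$ by recursive feasibility (Theorem~\ref{thm:rec_feas}), there exists a finite $t \geq 0$ such that $(x_k, v_k) \in \mc{B}_{\delta^\star}(\Sigma)$ for every $k \geq t$.

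To finish, I would apply the contrapositive of Lemma~\ref{lmm:convergence} pointwise along the tail of the trajectory: for each $k \geq t$ we have $(x_k, v_k) \in \mc{B}_{\delta^\star}(\Sigma)$ together with the structural identity $g(x_k) = v_k$, and hence $v_k = r^\star$. This establishes the claim with the same $t$. The step demanding the most care is the finite-time tube entry, since it relies on convergence of \emph{both} $x_k$ and $v_k$ so that $\|x_k - G_x v_k\|$ --- not merely $\|x_k - G_x r^\star\|$ --- can be driven below $\delta^\star$; once this is secured the conclusion is immediate, and the closed-loop identity $v_k = g(x_k)$ removes any need for a separate argument that the value $r^\star$ is absorbing.
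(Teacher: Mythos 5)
Your proof is correct, but it follows a genuinely different route from the paper's. The paper argues through the slice $S_x(\Lambda,r^\star)$: by Lemma~\ref{lmm:strictly_ss}, $\bar x_r^\star \in \Int S_x(\Lambda,r^\star)$, so the convergence $x_k \to \bar x_r^\star$ from Theorem~\ref{thm:asymptotic_convergence} forces $x_k$ into that slice in finite time; once $(x_k,r^\star)\in\Lambda$, the point $r^\star$ is feasible for \eqref{eq:fg_opt_problem}, and since it is the unconstrained minimizer of $V$ over all of $\mc{R}_\epsilon$, strong convexity gives $g(x_k)=r^\star$ directly. You instead work with the tube $\mc{B}_{\delta^\star}(\Sigma)$ from Lemma~\ref{lmm:convergence}: you use convergence of \emph{both} $x_k$ and $v_k$ (plus $v_k\in\mc{R}_\epsilon$ from Theorem~\ref{thm:rec_feas}) to show the pair $(x_k,v_k)$ eventually stays in the tube, and then apply the contrapositive of Lemma~\ref{lmm:convergence} together with the closed-loop identity $v_k=g(x_k)$ to conclude $v_k=r^\star$ along the whole tail. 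Both arguments are sound and both lean on Theorem~\ref{thm:asymptotic_convergence}, so there is no circularity in either. The trade-off: the paper's slice argument is more economical --- it needs only convergence of the state and elementary optimality of the FG, and it makes transparent why $r^\star$ is absorbing (feasibility of the pair $(x_k,r^\star)$ immediately pins down the minimizer); your tube argument reuses the heavier machinery of Lemma~\ref{lmm:convergence}, and correspondingly must track convergence of the reference as well as the state, which you correctly flag as the delicate step. A minor aesthetic point in your favor: by establishing tube membership for \emph{all} $k\geq t$ up front, your proof never needs to separately argue persistence, whereas the paper's phrasing ``$x_t \in S_x(\Lambda,r^\star)$'' implicitly relies on $x_k$ remaining in the slice for $k\geq t$ (which does follow from convergence to an interior point, but is left tacit).
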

\begin{proof}
Due to Lemma~\ref{lmm:strictly_ss}, $(\bar x^\star_r,r^\star)\in \Sigma \subset \Int \Gamma_N$. Thus, $\bar x^\star_r \in \Int S_x(\Gamma_N,r^\star)$. In addition, the definition of $\Lambda = \Gamma_N \cap (\reals^{n_x} \times \mc{R}_\epsilon)$ implies $S_x(\Lambda,v) = S_x(\Gamma_N,v)$ for all $v \in \mc{R}_\epsilon$ and therefore $\bar x^\star_r \in \Int S_x(\Lambda,r^\star)$. Since $\bar x_r^\star \in \Int S_x(\Lambda,r^\star)$ and $x_k \to \bar x^\star_r$ as $k\to \infty$ (Theorem~\ref{thm:asymptotic_convergence}) there exists a finite $t \geq 0$ such that $x_t \in S_x(\Lambda,r^\star)$. By strong convexity of $V$, it follows from \eqref{eq:fg_opt_problem} that $g(x) = r^\star$ for all $x \in S_x(\Lambda,r^\star)$, which implies $v_k = r^\star$ for all $k \geq t$.
\end{proof}

\section{Numerical Examples} \label{ss:double_integrator}
 We consider a double integrator example, which allows us to represent the geometries of the various sets. The system matrices are 
\begin{gather*}
    A = \begin{bmatrix} 1 & 0.1 \\ 0 & 1 \end{bmatrix}, 
    B = \begin{bmatrix} 0 \\ 0.1 \end{bmatrix}, 
    C = \begin{bmatrix} 1 & 0 \\ 0 & 1 \\ 0 & 0 \end{bmatrix}, 
    D = \begin{bmatrix} 0 \\ 0 \\ 1 \end{bmatrix}, 
\end{gather*}
$E = \begin{bmatrix} 1 & 0 \end{bmatrix}$, and $F = 0$, and the sampling time is $t_s=0.1$. The default constraint set is 
\begin{equation*}
	\mc{Y}_1 = [-1,~1] \times [-0.25,~0.25] \times [-0.25,~0.25],
	\label{eq:const1}
\end{equation*}
and the MPC parameters are $Q = I$, $R = 1$, and $N = 10$ unless otherwise specified. The reference $r = 0.75$ and initial condition $x_0 = [-1,~0]^T$ are chosen such that $r\in\mc{R}_{0.01}$ and $x_0 \notin S_x(\Gamma_{10},r)$. For all the following figures, the terminal set $\mathcal{T} = \tilde{O}_{\infty}^{0.01}$ is computed using the procedure in \cite{gilbert1991linear}.

Figure~\ref{fig:gamma_vs_T} illustrates the sets $\mathcal{T}$ and $\Gamma_{10}$. The terminal set $\mathcal{T}$ is entirely contained in the feasible set, and in both cases $v$ is implicitly bounded by the constraints on $x_1$. The trajectory of the double integrator is displayed in Figure \ref{fig:3D_traj} and illustrates how the MPC + FG navigates $\Gamma_{10}$. By the time $v_k=r$, the current state $x_k$ of the system has entered $\mathcal{D}_{MPC}$ (yellow). From here, the FG holds the auxiliary reference constant and the MPC controller ensures that $x_k \to \bar{x}_r$ as $k\to\infty$.

\begin{figure}[h]
	\centering
	\includegraphics[width = \columnwidth]{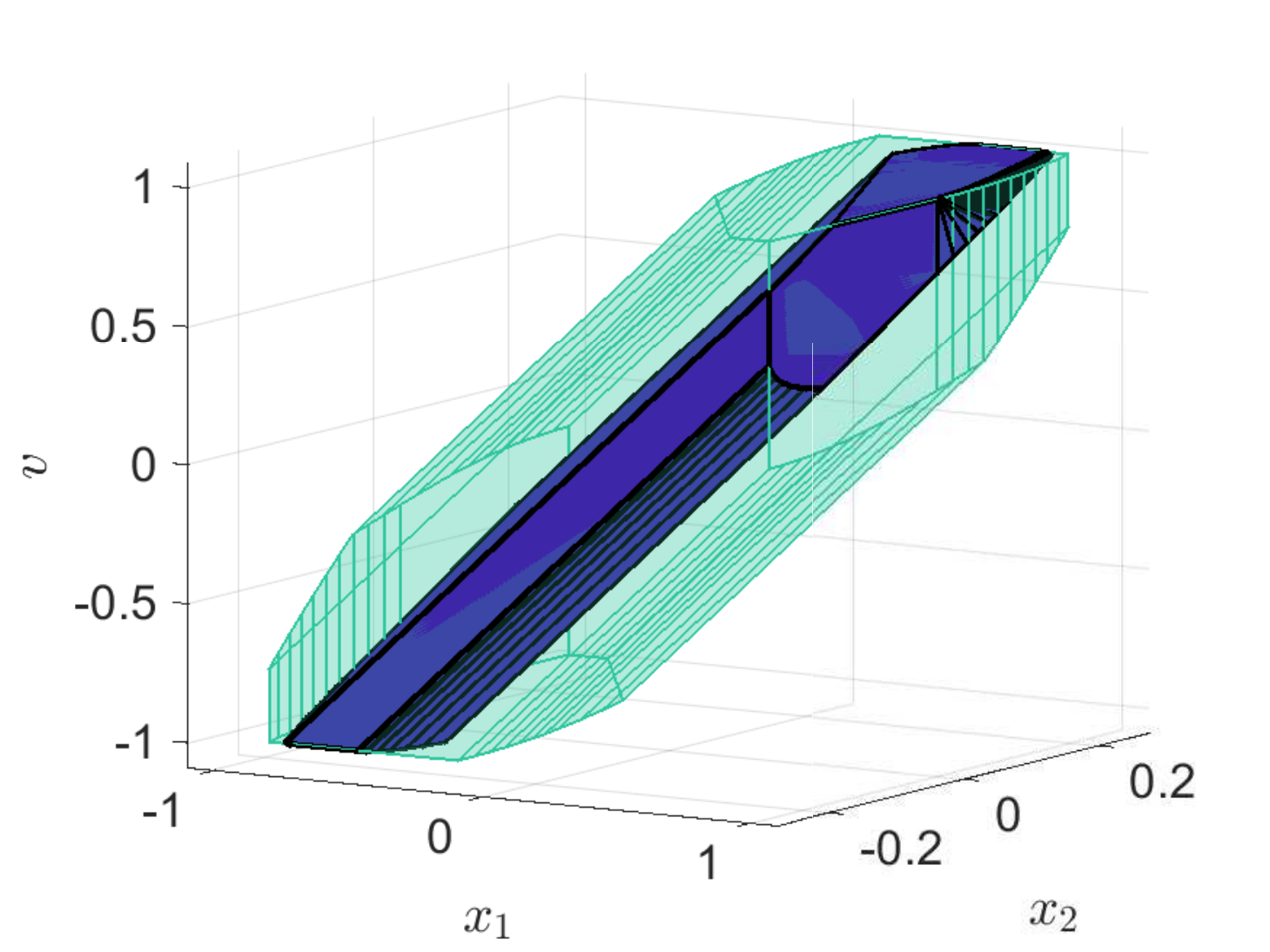}
	\caption{Terminal set $\mathcal{T}$ (blue) encased in feasible set $\Gamma_{10}$ (teal) for the double integrator with constraints $\mathcal{Y}_1$.}
	\label{fig:gamma_vs_T}
\end{figure}

\begin{figure}[h]
	\centering
	\includegraphics[width = \columnwidth]{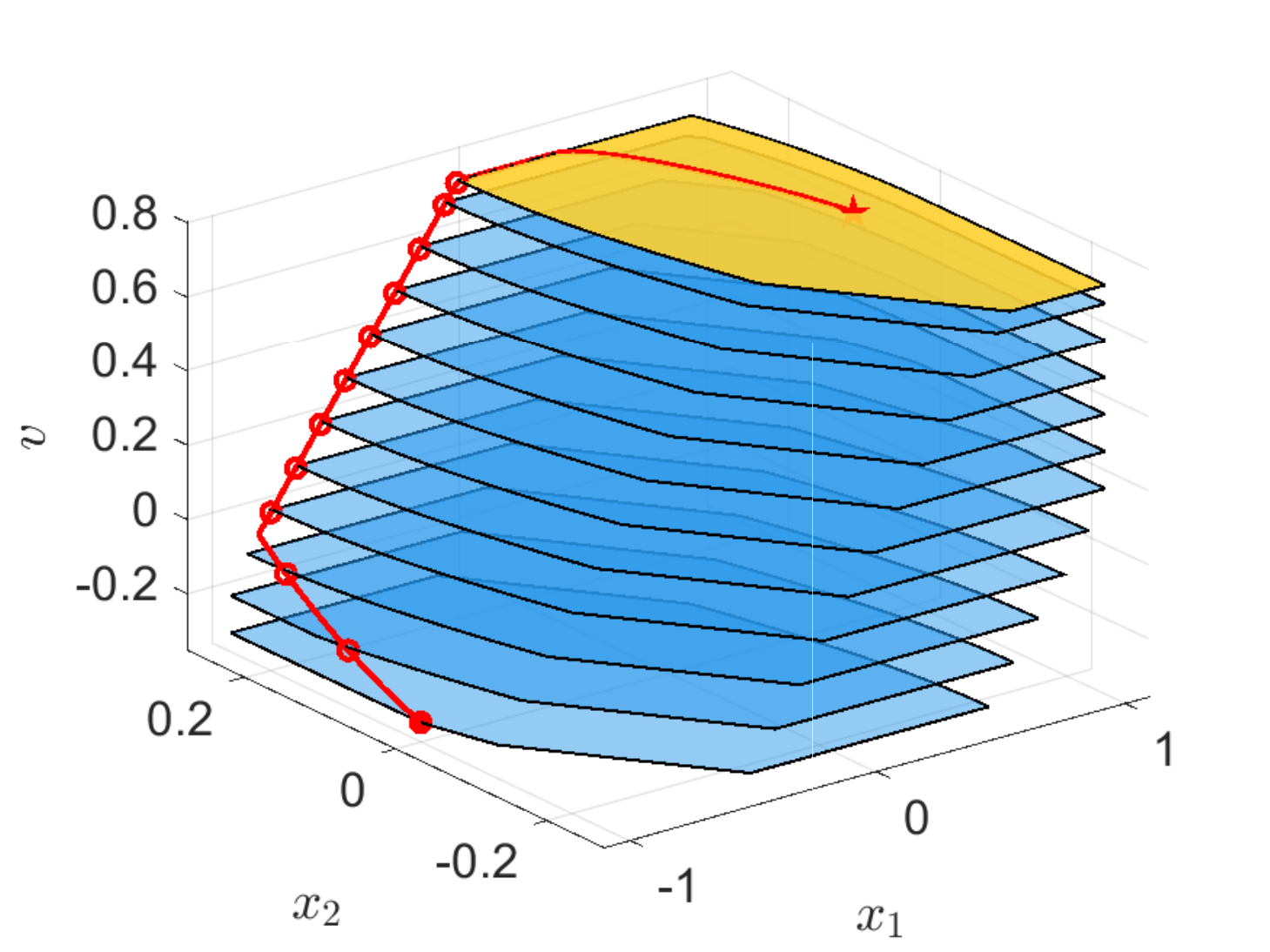}
	\caption{A closed-loop trajectory of the double integrator over the slices $S_x(\Gamma_{10},v)$ for different values of $v$ with constraints $\mathcal{Y}_1$. Circle markers show when the trajectory enters each slice and the star is the point $(x^*_r,v^*_r)$.}
	\label{fig:3D_traj}
\end{figure}

The feasible sets form an increasing sequence of sets in $N$, i.e., $\Gamma_N \subseteq \Gamma_{N+1}$ for all $N\geq 0$. This is illustrated in Figure~\ref{fig:DOA_vs_N} which uses a modified constraint set
\begin{equation*}
	\mc{Y}_2 = [-1,~1] \times [-1,~1] \times [-0.05,~0.05]
\end{equation*}
for clarity. The set $\Gamma_N$ appears to be approaching some $\Gamma_\infty \supseteq \Gamma_N$, we hypothesize that this occurs whenever $\mc{Y}$ is compact. 

\begin{figure}
	\centering
	\includegraphics[width = \columnwidth]{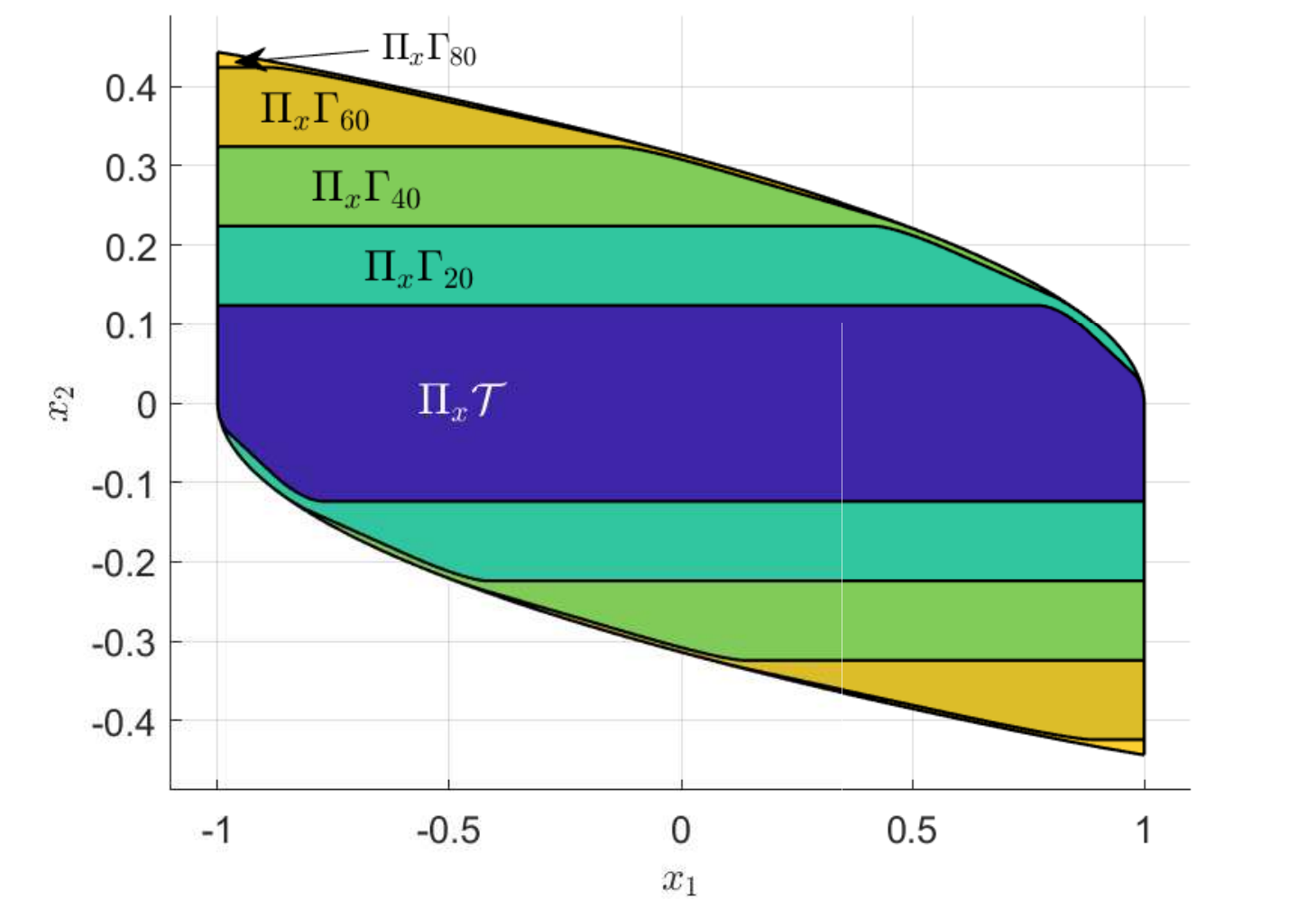}
	\caption{Increasing the control horizon $N$ expands the size of the feasible set while the terminal set stays constant. Here $\mathcal{T} = \Gamma_0 = \tilde{O}_{\infty}^{0.01}$ with constraints $\mathcal{Y}_2$ }
	\label{fig:DOA_vs_N}
\end{figure}

Figure \ref{fig:di_comparison} compares the MPC + FG feedback law with $N = 10$ to an un-goverened MPC controller with $N = N^* = 236$ where
\begin{equation}\label{eq:Nstar}
	N^* = N^*(x_0,r,\mc{T}) = \inf_i~\{i~|~(x_0,r)\in \Gamma_i\}
\end{equation}
is the smallest horizon length such that the MPC policy is feasible for the chosen $x_0$. Both these control laws are also compared to a CG applied to the LQR gain. All three controllers use $Q=100I$ and $R=1$. The constraint set
\begin{equation*}
	\mc{Y}_3 = [-20,~20] \times [-1,~1] \times [-0.25,~0.25]
\end{equation*}
is chosen to illustrate what happens when the initial conditions $x_0 = [-17,~0]^T$, and reference $r=4$ are chosen far away from each other. As displayed in Figure~\ref{fig:di_comparison}, there is $\approx$ 37\% increase in rise time using the FG, but the worst case computation time for the combined FG and MPC feedback policy is over 5000 times faster than of the un-governed MPC, as seen in Table \ref{tab:texe_di}.

\begin{figure}
	\centering
	\includegraphics[width = \columnwidth]{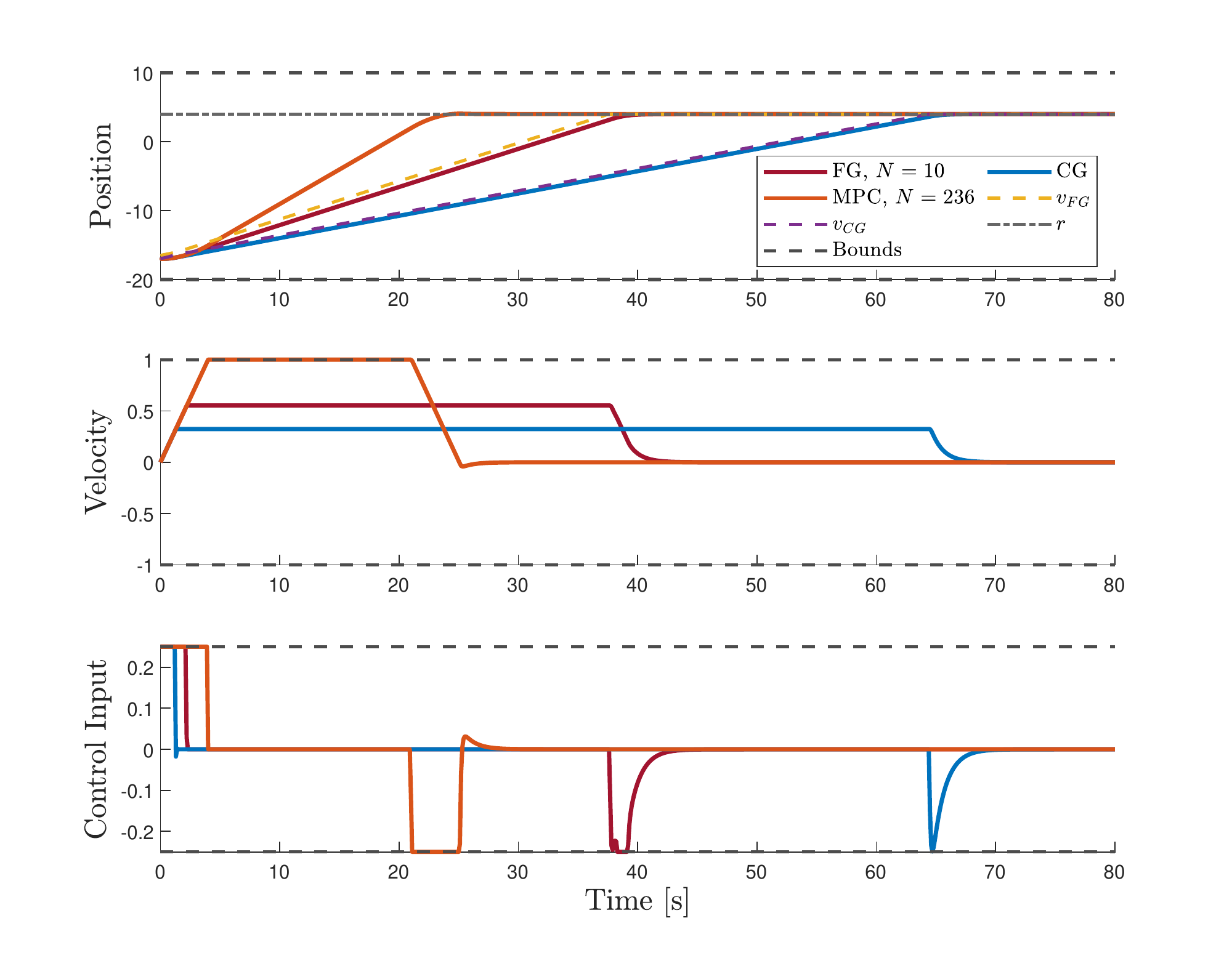}
	\caption{Closed-loop double integrator dynamics for various control laws with constraints $\mathcal{Y}_3$. The FG out performs the CG, and although the MPC has the best performance, its control horizon is too large for real-time applications.}
	\label{fig:di_comparison}
\end{figure}

\begin{table}
\centering
\caption{Execution time data for the double integrator example.}
\label{tab:texe_di}
\resizebox{\columnwidth}{!}{
\begin{tabular}{|c|c|c|c|c|} \hline
 & FG ($N = 10$) & MPC ($N =10$) & MPC ($N = 236$) & CG \\\hline
 TAVE [ms] & $0.0126$ & $0.0884$ & $255$ & $0.00833$ \\ \hline
 TMAX [ms] & $0.063$ & $0.345$ & $2170$ & $0.0369$ \\ \hline
\end{tabular}}
\end{table}

\section{Conclusions}
This paper introduced the Feasibility Governor (FG), an add-on unit that expands the region of attraction of linear model predictive controllers by filtering the reference input passed to the controller and is designed to interfere minimally with the operation and construction of the nominal controller. It was shown that the FG is safe, converges in finite time, and extends the region of attraction of MPC controllers at a fraction of the computation cost associated with increasing the prediction horizon. Future work includes extending FG theory to handle the case when $G_z$ is not invertible and introducing an easier to compute approximation of the feasible set $\Gamma_N$ to address the curse of dimensionality for larger systems.

\bibliography{feasibility-governor}

\end{document}